\newtheorem{theorem}{Theorem}[section]
\newtheorem*{theorem*}{Theorem}
\newtheorem{lemma}[theorem]{Lemma}
\newtheorem{Claim}{Claim}
\newtheorem{prop}{Proposition}[section]
\newtheorem{remark}[prop]{Remark}
\newtheorem{definition}[prop]{Definition}
\makeatletter \@addtoreset{equation}{section} \makeatother
\newcommand{\chapter}
\newcommand{\R}{\mathbb{R}}
\newcommand{\circA}{\accentset{\circ}{A}}
\def\<{\langle}
\def\>{\rangle}
\begin{document}
\title{
On the Morse index of free-boundary \\ 
CMC hypersurfaces in the upper hemisphere 
}
\author{CRÍSIA DE OLIVEIRA}
\maketitle

\begin{abstract}
\noindent We prove results for free-boundary hypersurfaces in the upper unit hemisphere $\mathbb{S}^{n+1}_{+}$ 
of $\R^{n+2}$. First we show that if the norm squared of the second fundamental form  
is constant, the Morse index of a free-boundary minimal hypersurface $\Sigma\subset \mathbb{S}^{n+1}_{+}$ equals: $1$ if $\Sigma$ is a totally geodesic equator, $n+1$ if $\Sigma$ is half of the Clifford torus, or it is at least $2(n+1)$ when $\Sigma$ is not totally geodesic. Next we prove an estimate for the first eigenvalue $\lambda_1$ of the second variation's Jacobi operator, and show that $\lambda_1 \leq -2n$ if $\Sigma$ is not totally geodesic, with equality  iff $\Sigma$ is half of the minimal Clifford torus. Furthermore,  $\lambda_1 = -n$ iff $\Sigma$ is totally geodesic. Finally,  if $\Sigma$ is not totally umbilical the Morse index is at least $n+1$, with equality precisely when $\Sigma$ is the upper $\mathrm{H}$-torus. For totally umbilical hypersurfaces the Morse index is $1$. We also prove an upper bound for the first eigenvalue of free-boundary $\mathrm{CMC}$ hypersurfaces, 
where equality corresponds to totally umbilical hypersurfaces.
\end{abstract}

\section{Introduction}
Let $x_i$ denote the stardard coordinates on $\R^{n+2}$ and call
$$\mathbb{S}^{n+1}_{+} = \mathbb{S}^{n+1} \cap \{x_{n+2} \geq 0\}$$ 
the upper half of the unit sphere $\mathbb{S}^{n+1}$ in $\mathbb{R}^{n+2}$. Consider an immersion $x: \Sigma \longrightarrow \mathbb{S}^{n+1}_{+}$, with non-empty boundary $\partial \Sigma$ and constant mean curvature $(\mathrm{CMC})$. We call $x$ a free-boundary hypersurface if has constant contact angle $\theta = \frac{\pi}{2}$. Free-boundary hypersurfaces are known to be solutions to the variational problem given by the area functiona
\begin{align}
    \mathcal{A}(t) = \int_{\Sigma}d\Sigma_{t}.
\end{align}
More precisely, consider a variation of the hypersurface $x: \Sigma \longrightarrow \mathbb{S}^{n+1}_{+}$. This is, for any $t \in (-\epsilon, \epsilon)$, a family
\begin{align*}
    F_t: \Sigma \longrightarrow \mathbb{S}^{n+1}_{+}
\end{align*}
of immersions with $F_{t}(\mathrm{int}(\Sigma)) \subset \mathrm{int}(\mathbb{S}^{n+1}_{+})$, $F_t(\partial \Sigma) \subset \partial \mathbb{S}^{n+1}_{+}$ and $F_0 = x$. Free-boundary $\mathrm{CMC}$ hypersurfaces with non-empty boundary are characterized as the  critical points if we restrict to  volume preserving variations, i.e. solutions to
\begin{align}\label{Eq. 1.2}
    \mathcal{A}'(0) = \frac{d \mathcal{A}(t)}{dt}_{|t=0} = 0.
\end{align}

From a Morse-theoretical perspective, it is crucial to examine the area functional's second variation at the critical points, which entails studying a symmetric bilinear form in an appropriate function space. The second variation is given by the following 
formula, for smooth maps $f$ and $g$,
\begin{align}
    \mathrm{Q}(f,g) = -\int_{\Sigma}(\langle \nabla f, \nabla g \rangle - pfg)d\mu + \int_{\partial \Sigma}\mathrm{q}fg ds,
\end{align}
where the functions $p$ and $\mathrm{q}$ are determined by the geometry of $\Sigma$, and typically depend on the specific variational problem under consideration.

   The Morse index of $\Sigma$, denoted by $\mathrm{MI}(\Sigma)$, is the maximum dimension of any subspace $V$ of $C^{\infty}(\Sigma)$ on which $Q$ is negative definite.

In \cite[Theorem $5.1.1$]{Simons}, Simons proved that if $\Sigma$ is minimal, then $\mathrm{MI}(\Sigma) \geq 1$, and equality holds if and only if $\Sigma$ is a totally geodesic equator $\mathbb{S}^n \subset \mathbb{S}^{n+1}$. Later, Urbano \cite{Urbano} showed that if $n=2$, $\mathrm{MI}(\Sigma) \geq 5$ when $\Sigma$ is not totally geodesic. He characterized the Clifford torus as the only compact, non-totally geodesic minimal surface in $\mathbb{S}^{3}$ whose index is $5$. Urbano's result is fundamental in the study of minimal surfaces in the sphere, and was recently used by Marques and Neves to solve the  Willmore conjecture (see \cite{Codá}). 
\bigbreak

\noindent One may ask whether similar phenomena happen for free-boundary minimal hypersurfaces in the upper hemisphere $\mathbb{S}^{n+1}_{+}$. When 
the immersion $x: \Sigma \longrightarrow \mathbb{S}^{n+1}_{+}$ is minimal, such hypersurfaces are volume-critical among all deformations that  preserve the boundary $\partial \Sigma \subset \partial \mathbb{S}^{n+1}_{+}$. 
In the free-boundary case, the quadratic form induced by the Jacobi operator is

\begin{align*}
     Q(f) = -\int_{\Sigma}f Jgf\mu + \int_{\partial \Sigma}f(\nabla_{\eta}f-\mathrm{q}f) ds, \forall f\in V,
\end{align*}
where $\mathrm{q} = \mathrm{A}^{\partial \mathbb{S}^{n+1}_{+}}$ is the second fundamental form of $\partial \mathbb{S}^{n+1}_{+}$, $\eta$ is a conormal unit vector of  $\partial \Sigma$ and $ \nabla_{\eta}f = \langle \nabla f, \eta \rangle$. 
\bigbreak

\noindent The present paper's first result concerns free-boundary minimal hypersurfaces in the upper hemisphere, and goes as follows.

 \begin{theorem}
    Let $x: \Sigma \longrightarrow \mathbb{S}^{n+1}_{+}$ be a free-boundary minimal hypersurface and $\eta = e_{n+2}$ the conormal unit vector of $\partial \Sigma$. Suppose that $|\mathrm{A}|$ is constant. Then there are three possibilities: 
    \begin{enumerate}[label=(\roman*)]
    \item   $\mathrm{MI}(\Sigma) = 1$ and  $\Sigma=\mathbb{S}^{n}_{+} \subset \mathbb{S}^{n+1}_{+}$;
    \item  $\mathrm{MI}(\Sigma) = n+1$ and $\Sigma$ is half of a minimal Clifford torus:
    \begin{equation}\label{eq:1/2-Clifford}
        \Sigma=\mathbb{S}^{k}_{+}\left(\sqrt{\frac{k}{n}}\right) \times \mathbb{S}^{n-k}_{+}\left(\sqrt{\frac{n-k}{n}}\right);  
    \end{equation}
    \item  $\mathrm{MI}(\Sigma) \geq 2(n+1)$.
    \end{enumerate}
\end{theorem}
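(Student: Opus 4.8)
The plan is to turn the index into a Neumann eigenvalue count, classify $\Sigma$ from the hypothesis that $|A|$ is constant, and then produce negative directions out of the ambient coordinate and support functions. Since the equator $\partial\mathbb{S}^{n+1}_{+}$ is totally geodesic we have $\mathrm{q}=A^{\partial\mathbb{S}^{n+1}_{+}}=0$, so for $f\in C^{\infty}(\Sigma)$ the index form is $Q(f)=\int_{\Sigma}\big(|\nabla f|^{2}-(|A|^{2}+n)f^{2}\big)\,d\mu$, its natural (free) boundary condition is the Neumann condition $\nabla_{\eta}f=0$, and the Jacobi operator reads $J=\Delta+|A|^{2}+n$. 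First I would classify $\Sigma$: reflecting across the equator produces a smooth closed minimal hypersurface $\widehat{\Sigma}\subset\mathbb{S}^{n+1}$ on which $|A|$ is still constant, and the Chern--do Carmo--Kobayashi rigidity theorem forces either $|A|^{2}=0$ (so $\widehat\Sigma$ is totally geodesic and $\Sigma=\mathbb{S}^{n}_{+}$), or $|A|^{2}=n$ (so $\widehat\Sigma$ is a Clifford torus and $\Sigma$ is the half-torus \eqref{eq:1/2-Clifford}), or $|A|^{2}>n$; these are exactly cases (i), (ii), (iii).

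Next I would set up the test functions. For $v\in\R^{n+2}$ write $\ell_{v}=\langle x,v\rangle$ and $u_{v}=\langle N,v\rangle$. Minimality and the Codazzi equation give $\Delta\ell_{v}=-n\ell_{v}$ and $\Delta u_{v}=-|A|^{2}u_{v}$, hence $J\ell_{v}=|A|^{2}\ell_{v}$ and $Ju_{v}=nu_{v}$. The crucial boundary input is that the free-boundary condition together with the total geodesy of the equator forces $A\eta=\kappa\eta$ along $\partial\Sigma$ (equivalently $h(\eta,Y)=0$ for $Y$ tangent to $\partial\Sigma$), which I would deduce from the reflection symmetry of $\widehat\Sigma$. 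Decomposing $v=w+c\,e_{n+2}$ with $w\perp e_{n+2}$, this yields $\nabla_{\eta}\ell_{w}=\langle w,e_{n+2}\rangle=0$ and $\nabla_{\eta}u_{w}=-\langle w,A\eta\rangle=0$, so $\ell_{w}$ and $u_{w}$ satisfy the Neumann condition for every horizontal $w$, whereas $\phi:=\ell_{e_{n+2}}=x_{n+2}$ and $u_{e_{n+2}}$ vanish on $\partial\Sigma$.

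For the lower bound in case (iii) I would test $Q$ on the $2(n+1)$ Neumann functions $\ell_{e_{1}},\dots,\ell_{e_{n+1}},u_{e_{1}},\dots,u_{e_{n+1}}$. Because they are all Neumann, $Q(\ell_{w},\ell_{w'})=-|A|^{2}\int_{\Sigma}\ell_{w}\ell_{w'}$ and $Q(u_{w},u_{w'})=-n\int_{\Sigma}u_{w}u_{w'}$, while integrating the mixed term by parts onto either factor and invoking the symmetry of $Q$ gives $(|A|^{2}-n)\int_{\Sigma}\ell_{w}u_{w'}=0$, so $\int_{\Sigma}\ell_{w}u_{w'}=0$ since $|A|^{2}>n$. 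Hence $Q$ is block diagonal and, as $|A|^{2}>0$, negative definite on $\SPAN\{\ell_{e_{i}}\}\oplus\SPAN\{u_{e_{i}}\}$. Since $\Sigma$ is not totally geodesic it lies in no linear hyperplane, so the $\ell_{e_{i}}$ are linearly independent, and the corresponding non-degeneracy of the Gauss map yields independence of the $u_{e_{i}}$; therefore $\mathrm{MI}(\Sigma)\geq 2(n+1)$. Note this argument collapses exactly when $|A|^{2}=n$, which is why case (ii) is genuinely different.

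Finally, the sharp values in (i) and (ii). In case (i) one has $J=\Delta+n$ on $\mathbb{S}^{n}_{+}$, and the Neumann spectrum of the hemisphere has only the constant below $n$ (its first nonzero Neumann eigenvalue equals $n$), so $\mathrm{MI}(\Sigma)=1$. In case (ii) I would compute the Neumann spectrum of the half-Clifford torus \eqref{eq:1/2-Clifford} from the product of the hemisphere spectra of its factors and count the eigenvalues strictly below $2n$, which come out to exactly $n+1$, giving $\mathrm{MI}(\Sigma)=n+1$. The hard part will be these upper bounds: unlike the lower bound in (iii), excluding further negative directions in (i) and (ii) cannot be arranged with test functions and requires the complete Neumann eigenvalue count, where the explicit product structure of the Clifford torus is indispensable; a secondary technical point is confirming the non-degeneracy of the support functions $u_{e_{i}}$ entering the bound in (iii).
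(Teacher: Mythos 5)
Your proposal is correct and follows essentially the same route as the paper: the index bound in (iii) comes from testing $Q$ on the $2(n+1)$ functions $\langle x,e_i\rangle,\langle\nu,e_i\rangle$ ($i=1,\dots,n+1$), which are $J$-eigenfunctions with distinct constant eigenvalues $|\mathrm{A}|^2$ and $n$ satisfying the Neumann condition, and the equalities in (i) and (ii) come from the explicit Neumann spectra of $\mathbb{S}^n_+$ and the half Clifford torus. The only cosmetic difference is that you obtain the trichotomy on $|\mathrm{A}|^2$ and the boundary identity $\nabla_\eta\langle\nu,w\rangle=0$ by reflecting across the equator and invoking Chern--do Carmo--Kobayashi, whereas the paper works directly on $\Sigma$ via the integrated Simons identity (Lemma \ref{Lemma 3.1}) and the computation $\nabla_\eta\langle\nu,a\rangle=\mathrm{A}(\eta,\eta)\langle\eta,a\rangle$ (Lemma \ref{Lemma 1.2}).
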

\bigbreak

\noindent Next we shall take a closer look at the first eigenvalue of the Jacobi operator $J$ of the second variation. For a compact minimal hypersurface in $\mathbb{S}^{n+1}$, Alías \cite[Theorem $3$]{Alías 1} proved that $\lambda_1 = -n$ if and only if $x: \Sigma \longrightarrow \mathbb{S}^{n+1}$ is totally geodesic. He further showed that $\lambda_1 \leq -2n$ when $x: \Sigma \longrightarrow \mathbb{S}^{n+1}$ is not totally geodesic, with equality precisely when $x$ gives the Clifford torus $\mathbb{S}^{k}\left(\sqrt{\frac{k}{n}} \right) \times \mathbb{S}^{n-k}\left(\sqrt{\frac{n-k}{n}} \right)$.

 In the free-boundary case, let $\lambda_1$ indicate the first eigenvalue of the Neumann problem
\begin{align*}
\begin{cases}
        &Jf - \lambda f = 0 \ \textrm{on} \ \Sigma\\
        &\nabla_{\eta}f = 0 \ \textrm{on} \  \partial \Sigma.
        \end{cases}
    \end{align*}
As in our case $\mathrm{q} = 0$, $\lambda_1$ becomes
    \begin{align}\label{Eq.1.1}
         \lambda_1 = \displaystyle\inf \left\{\frac{-\displaystyle\int_{\Sigma}fJf d\mu}{\displaystyle\int_{\Sigma}f^2 d\mu}\colon f \in C^{\infty}(\Sigma) \setminus \{0\}\right\}.
    \end{align}
    We shall prove the following.
    
\begin{theorem}
 Let $x: \Sigma \longrightarrow \mathbb{S}^{n+1}_{+}$ be a free-boundary minimal hypersurface such that $\nabla_{\eta}|\mathrm{A}|^2 = 0$. Then either
 \begin{enumerate}[label=(\roman*)]
 \item  $\lambda_1 \leq -2n $, with equality if and only if $\Sigma$ is \eqref{eq:1/2-Clifford} or
 \item $\lambda_1 = -n$ and $\Sigma$ is totally geodesic.
 \end{enumerate}
\end{theorem}
As a matter of fact, we are able to generalize the above to the free-boundary $\mathrm{CMC}$ case with respect to weak Morse index of $\Sigma$, we denoted by $\mathrm{MI}_{W}(\Sigma)$.
\begin{theorem}
    Let $x: \Sigma \longrightarrow \mathbb{S}^{n+1}_{+}$ be a free-boundary hypersurface with constant mean curvature $\mathrm{H}>0$, and call  $\eta = e_{n+2}$ a conormal unit vector of $\partial \Sigma$. If $|\mathrm{A}|$ is constant  then 
    \begin{itemize}
        \item i. either $\mathrm{MI}_{W}(\Sigma) = 0$ and $ \mathbb{S}^{n}_{+}(r) \subset \mathbb{S}^{n+1}_{+}$,
        \item ii. or $\mathrm{MI}_{W}(\Sigma) \geq n+1$, with equality if and only if $\Sigma$ is upper $\mathrm{H}$- Clifford torus $\mathbb{S}^{k}(r)_{+}\times \mathbb{S}^{n-k}(\sqrt{1-r^2})$ with radius $\sqrt{\frac{k}{n+2}} \leq r \leq \sqrt{\frac{k+2}{n+2}}$. 
    \end{itemize}
    
\end{theorem}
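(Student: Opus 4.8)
The plan is to analyse the CMC Jacobi operator $J = \Delta + |\mathrm{A}|^2 + n$ (here $\mathrm{Ric}_{\mathbb{S}^{n+1}}(N,N)=n$) together with the weak second variation $Q(f) = -\int_{\Sigma} fJf\,d\mu + \int_{\partial\Sigma} f(\nabla_{\eta}f-\mathrm{q}f)\,ds$, where $\mathrm{q}=0$ since $\partial\mathbb{S}^{n+1}_{+}$ is totally geodesic; by definition $\mathrm{MI}_{W}(\Sigma)$ is the maximal dimension of a subspace of the volume-preserving class $\{f\colon \int_{\Sigma}f\,d\mu=0\}$ on which $Q$ is negative definite. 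As in the minimal Theorems above, the engine is the pair of ambient functions $g_{a}=\langle x,a\rangle$ and $f_{a}=\langle N,a\rangle$ attached to a constant vector $a\in\R^{n+2}$. Using $\Delta x = -nx+\mathrm{H}N$, the Codazzi identity $\ddiv\mathrm{A}=\nabla\mathrm{H}=0$, and $\nabla g_{a}=a^{\top}$, $\nabla f_{a}=-\mathrm{A}a^{\top}$, I would derive the closed system
$$J g_{a} = |\mathrm{A}|^{2} g_{a} + \mathrm{H} f_{a}, \qquad J f_{a} = \mathrm{H} g_{a} + n f_{a}.$$

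Since $|\mathrm{A}|$ is constant, $S:=|\mathrm{A}|^{2}$ is a constant and this is a constant-coefficient system governed by $M=\begin{pmatrix} S & \mathrm{H}\\ \mathrm{H} & n\end{pmatrix}$. Diagonalizing $M$, for each $a$ I form $\psi_{a}=\alpha g_{a}+\beta f_{a}$ from the eigenvector of the larger eigenvalue $\mu_{+}=\tfrac12\big(S+n+\sqrt{(S-n)^{2}+4\mathrm{H}^{2}}\big)$, so that $J\psi_{a}=\mu_{+}\psi_{a}$ with $\mu_{+}\ge (S+n)/2>0$; then $Q(\psi_{a})=-\mu_{+}\int_{\Sigma}\psi_{a}^{2}<0$ once the boundary term is killed. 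Two facts make $\{\psi_{a}\colon a\perp e_{n+2}\}$ admissible. First, a reflection argument across the equator — doubling $\Sigma$ to a closed hypersurface invariant under $R(x)=x-2x_{n+2}e_{n+2}$, which is smooth precisely because of the free-boundary (orthogonality) condition — shows that $\mathrm{A}$ is block-diagonal along $\partial\Sigma$, so $\eta=e_{n+2}$ is a principal direction, $\mathrm{A}\eta=\kappa\eta$. Computing $\langle a^{\top},e_{n+2}\rangle=a_{n+2}$ on $\partial\Sigma$ (using $x_{n+2}=0$ and $\langle N,e_{n+2}\rangle=0$) then gives $\nabla_{\eta}g_{a}=a_{n+2}$ and $\nabla_{\eta}f_{a}=-\kappa a_{n+2}$, both vanishing for $a\in\R^{n+1}=e_{n+2}^{\perp}$, so the Neumann condition holds. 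Second, integrating $J\psi_{a}=\mu_{+}\psi_{a}$ with Neumann yields $(\mu_{+}-(S+n))\int_{\Sigma}\psi_{a}=0$; for non-umbilical $\Sigma$ one has $\det M=Sn-\mathrm{H}^{2}>0$ (strict Cauchy–Schwarz), hence $\mu_{+}<S+n$ and $\int_{\Sigma}\psi_{a}=0$ automatically.

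Consequently $W=\operatorname{span}\{\psi_{e_{1}},\dots,\psi_{e_{n+1}}\}$ is an $(n+1)$-dimensional space of mean-zero Neumann functions on which $Q$ is negative definite — after a routine non-degeneracy check that $a\mapsto\psi_{a}$ is injective on $\R^{n+1}$, using $\nabla\psi_{a}=(\alpha I-\beta\mathrm{A})a^{\top}$ — so $\mathrm{MI}_{W}(\Sigma)\ge n+1$, which is alternative (ii). In the umbilical case $S=\mathrm{H}^{2}/n$ forces $\det M=0$ and $\mu_{-}=0$; here $\Sigma=\mathbb{S}^{n}_{+}(r)$ and the same computation shows $Q\ge 0$ on mean-zero functions, giving $\mathrm{MI}_{W}(\Sigma)=0$, which is alternative (i).

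Finally, for the equality clause I would first confirm by direct spectral computation that the upper $\mathrm{H}$-torus realizes $\mathrm{MI}_{W}=n+1$, and then prove the converse: a non-umbilical $\Sigma$ with $\mathrm{MI}_{W}=n+1$ is an $\mathrm{H}$-torus in the stated radius window. The idea is that equality forces the family $\{\psi_{a}\}$ to exhaust the negative part of $Q$, which via an integral (Simons-type) rigidity using $|\mathrm{A}|$ constant should pin $\Sigma$ down to two constant principal curvatures, i.e. a Clifford torus $\mathbb{S}^{k}(r)_{+}\times\mathbb{S}^{n-k}(\sqrt{1-r^{2}})$. On such a product $J$ diagonalizes along the Laplace eigenspaces of the two factors (with the half-factor supplying Neumann data at the equator), and counting the positive eigenvalues of $J$ on mean-zero functions should produce exactly $n+1$ negative directions precisely when $\sqrt{k/(n+2)}\le r\le\sqrt{(k+2)/(n+2)}$, while radii outside this band — or any model with more than two principal curvatures — yield strictly more. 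I expect this last step, namely the sharp eigenvalue count for the $\mathrm{H}$-torus and the exclusion of all other isoparametric models, to be the main obstacle, since it needs the full spectral analysis of $J$ on the product rather than the single sharp test family that suffices for the lower bound.
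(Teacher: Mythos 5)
Your overall strategy is the paper's: feed the coordinate functions $\langle x,a\rangle$, $\langle \nu,a\rangle$ into the $2\times 2$ constant-coefficient system for $J$, diagonalize, and test $Q$ on the resulting eigenfunctions, checking the Neumann and mean-zero conditions with $a\perp e_{n+2}$ (your integration trick $(S+n-\mu_{+})\int_{\Sigma}\psi_{a}=0$ for the mean-zero condition is clean and correct). However, there is a genuine gap in the dimension count. You build $W$ from the single branch $\mu_{+}$ and dismiss the injectivity of $a\mapsto\psi_{a}$ on $e_{n+2}^{\perp}$ as a ``routine non-degeneracy check.'' It is not: on a Clifford torus $\mathbb{S}^{k}(r)\times\mathbb{S}^{n-k}(\sqrt{1-r^{2}})$ one has, for $a$ in the ambient space of the first factor, $\langle x,a\rangle=r\langle u,a\rangle$ and $\langle \nu,a\rangle=\pm\sqrt{1-r^{2}}\,\langle u,a\rangle$, so $\langle x,a\rangle$ and $\langle\nu,a\rangle$ are proportional and exactly one of the two eigencombinations vanishes identically. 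Thus $\ker$ of a single branch can have dimension as large as $k$ (or $n-k+1$), and $\dim W<n+1$ precisely on the model realizing equality. The paper avoids this by using \emph{both} families $E_{\pm}$ and proving only the weaker statement $\ker\psi_{+}\cap\ker\psi_{-}=\{0\}$ (if both combinations vanish for the same $a\neq 0$ then $\langle\nu,a\rangle\equiv 0$, forcing $\Sigma$ to be an equator, contradicting $\mathrm{H}>0$); rank--nullity then gives $\dim E_{+}+\dim E_{-}\geq n+1$, and since both roots $\lambda_{\pm}$ are positive in the non-umbilical case ($\lambda_{+}\lambda_{-}=n|\mathrm{A}|^{2}-\mathrm{H}^{2}>0$ by Cauchy--Schwarz), $Q$ is negative definite on $E_{+}\oplus E_{-}$. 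You should adopt this two-branch count; your one-branch version does not yield $n+1$ independent directions in general.

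Two further points. First, for item (i) you assert that umbilicity gives $Q\geq 0$ on mean-zero functions by ``the same computation,'' but the coordinate functions only produce negative directions, so they cannot certify stability; the paper proves weak stability of $\mathbb{S}^{n}_{+}(r)$ by the explicit Neumann spectrum of $J=\Delta+\tfrac{n}{r^{2}}$ on the hemisphere ($\lambda_{1}^{W}=\mu_{2}-\tfrac{n}{r^{2}}=0$), and conversely derives umbilicity from weak stability by testing against $g_{a}=\tfrac{\mathrm{H}}{n}\langle\nu,a\rangle-\langle x,a\rangle$ (Theorem 4.1 of the paper). Second, you explicitly leave the equality characterization in (ii) unproved; the paper's contribution there is the spectral computation of $J=\Delta+\tfrac{k}{r^{2}}+\tfrac{n-k}{1-r^{2}}$ on the product, counting the negative Neumann eigenvalues on mean-zero functions, which is where the radius window $\sqrt{k/(n+2)}\leq r\leq\sqrt{(k+2)/(n+2)}$ arises. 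As submitted, your argument establishes neither the lower bound in full nor the rigidity statement.
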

Alías and collaborators studied the first eigenvalue of the Jacobi operator $J$ on compact hypersurfaces with CMC $\mathrm{H}>0$, obtaining the following \cite[Theorem $2.2$]{Alías 4}.
\begin{theorem*}
    [Alías-Brasil-Perdomo]
    Let $\Sigma$ be an orientable, compact minimal hypersurface immersed in 
    $\mathbb{S}^{n+1}$ and let $\lambda_1$ indicate the first eigenvalue of its stability operator $J = \Delta + |\mathrm{\circA}|^2 + n(1+ \mathrm{H}^2)$. Then
    \begin{enumerate}[label=(\roman*)]
        \item either $\lambda_1 = -n(1+ \mathrm{H}^2)$ (and $\Sigma$ is totally umbilical),
        \item or $\lambda_1 \leq -2n(1+ \mathrm{H}^2) + \frac{n(n-2)}{\sqrt{n(n-1)}}|\mathrm{H}|\max|\mathrm{\circA}|$.
    \end{enumerate}
    Moreover, $\lambda_1 = -2n(1+ \mathrm{H}^2) + \frac{n(n-2)}{\sqrt{n(n-1)}}|\mathrm{H}|\max|\mathrm{\circA}|$ if and only if
    \begin{enumerate}[label=(\roman*)]
        \item $n=2$ and $\Sigma$ is an $\mathrm{H}(r)$-torus $\mathbb{S}^1(r) \times \mathbb{S}^1(\sqrt{1-r^2})$,
         \item $n\geq 3$ and $\Sigma$ is an $\mathrm{H}(r)$-torus $\mathbb{S}^{n-1}(r) \times \mathbb{S}^1(\sqrt{1-r^2})$, with $r^2 \leq \frac{n-1}{n}$.
    \end{enumerate}
\end{theorem*}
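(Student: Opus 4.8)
The plan is to estimate $\lambda_1$ from above through its Rayleigh characterization, using the fact that $\lambda_1$ is the \emph{bottom} of the spectrum of $-J$ so that \emph{every} $f\in H^1(\Sigma)\setminus\{0\}$ gives an upper bound (no orthogonality is required):
\[
\lambda_1\leq\frac{\int_{\Sigma}\big(|\nabla f|^2-(|\circA|^2+n(1+\mathrm{H}^2))f^2\big)\,d\Sigma}{\int_{\Sigma}f^2\,d\Sigma}.
\]
The decisive choice is the test function $f=|\circA|$, i.e. the point function $p\mapsto|\circA|(p)$. This is globally Lipschitz and lies in $H^1(\Sigma)$, so it is admissible; to be safe near the zero set of $\circA$ I would either work directly in $H^1$ or regularize by $\sqrt{|\circA|^2+\varepsilon}$ and let $\varepsilon\to0$. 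The totally umbilical case $\circA\equiv0$ must be set aside first and handled separately: there $J=\Delta+n(1+\mathrm{H}^2)$ has the constant function as ground state, giving $\lambda_1=-n(1+\mathrm{H}^2)$, which is alternative (i).

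With $f=|\circA|$ the numerator becomes $\int|\nabla|\circA||^2-\int|\circA|^4-n(1+\mathrm{H}^2)\int|\circA|^2$. First I would apply Kato's inequality $|\nabla|\circA||\leq|\nabla\circA|$, valid a.e., to replace $\int|\nabla|\circA||^2$ by $\int|\nabla\circA|^2$. Next I would substitute the integrated Alencar--do Carmo (Simons-type) identity for CMC hypersurfaces in $\mathbb{S}^{n+1}$,
\[
\int_{\Sigma}|\nabla\circA|^2=\int_{\Sigma}|\circA|^4-n(1+\mathrm{H}^2)\int_{\Sigma}|\circA|^2-n\mathrm{H}\int_{\Sigma}\mathrm{tr}(\circA^3),
\]
which I would obtain by integrating the pointwise formula $\tfrac12\Delta|\circA|^2=|\nabla\circA|^2+|\circA|^2(n(1+\mathrm{H}^2)-|\circA|^2)+n\mathrm{H}\,\mathrm{tr}(\circA^3)$ over the compact (closed) $\Sigma$, the divergence term dropping by the divergence theorem. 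The quartic terms then cancel, leaving $\lambda_1\leq-2n(1+\mathrm{H}^2)-n\mathrm{H}\,\int\mathrm{tr}(\circA^3)/\int|\circA|^2$. Finally I would estimate the cubic term by Okumura's inequality $|\mathrm{tr}(\circA^3)|\leq\frac{n-2}{\sqrt{n(n-1)}}|\circA|^3$ together with $\int|\circA|^3\leq\max_{\Sigma}|\circA|\int|\circA|^2$, which produces precisely the factor $\frac{n(n-2)}{\sqrt{n(n-1)}}$ and yields $\lambda_1\leq-2n(1+\mathrm{H}^2)+\frac{n(n-2)}{\sqrt{n(n-1)}}|\mathrm{H}|\max|\circA|$, the bound in alternative (ii). The sign of the cubic term in the Simons identity is immaterial here because it is absorbed by the absolute value, but I would record it, since it controls the orientation in the equality discussion.

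The hard part will be the equality analysis and the resulting classification. Equality in the final bound forces simultaneous equality in: Kato's inequality; Okumura's inequality pointwise; the estimate $\int|\circA|^3=\max|\circA|\int|\circA|^2$; the sign normalization $-\mathrm{H}\,\mathrm{tr}(\circA^3)=|\mathrm{H}||\mathrm{tr}(\circA^3)|$; and the statement that $|\circA|$ is itself a first eigenfunction of $J$. The third condition forces $|\circA|\equiv\max|\circA|$ constant, and constancy together with pointwise Okumura equality forces $\circA$ to have, at each point, $n-1$ equal principal values and one distinct value, so $\Sigma$ is isoparametric with two principal curvatures of multiplicities $n-1$ and $1$; the sign condition then selects the range $r^2\leq\frac{n-1}{n}$ for the resulting $\mathrm{H}(r)$-torus $\mathbb{S}^{n-1}(r)\times\mathbb{S}^1(\sqrt{1-r^2})$. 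I expect the genuine obstacle to be upgrading the \emph{pointwise} isoparametric condition to the \emph{global} product structure, which requires either invoking the classification of isoparametric hypersurfaces with two distinct principal curvatures or arguing directly that "$|\circA|$ constant plus Okumura equality" integrates up to the torus, and then checking that the orientation forced by the sign condition is exactly $r^2\leq\frac{n-1}{n}$. The case $n=2$ must be treated on its own, since there Okumura's constant vanishes and $\mathrm{tr}(\circA^3)\equiv0$ automatically, so the cubic term disappears and the equality locus becomes the flat torus $\mathbb{S}^1(r)\times\mathbb{S}^1(\sqrt{1-r^2})$ for every $r$.
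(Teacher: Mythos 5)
This statement is quoted from Al\'ias--Brasil--Perdomo and the paper itself gives no proof of it, so the fair comparison is with the paper's proofs of its own analogues (Theorems \ref{Theorem 3.2}--\ref{Theorem 4.4}), which run on exactly the circle of ideas you propose: test function $|\mathrm{\circA}|$ in the Rayleigh quotient, the Alencar--do Carmo/Simons identity, Okumura's inequality (the paper's Lemma \ref{Lema 2.3}), constancy of $|\mathrm{\circA}|$ from $\int|\mathrm{\circA}|^3=\max|\mathrm{\circA}|\int|\mathrm{\circA}|^2$, and the isoparametric classification in the equality case. Your plan is correct; the one genuine difference is the gradient step: you use plain Kato $|\nabla|\mathrm{\circA}||\leq|\nabla\mathrm{\circA}|$, whereas the paper (following Al\'ias--Brasil--Perdomo) uses the refined inequality $|\nabla|\mathrm{T}|^2|^2\leq\frac{4n}{n+2}|\mathrm{T}|^2|\nabla\mathrm{T}|^2$ for traceless Codazzi tensors (Lemma \ref{Lemma 1.6}). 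The refinement buys an extra term $-\frac{2}{n+2}\int_\Sigma|\nabla\mathrm{\circA}|^2\,d\mu\big/\int_\Sigma|\mathrm{\circA}|^2\,d\mu$ in the upper bound, so that equality forces $\nabla\mathrm{\circA}=0$ outright and the two-principal-curvature/isoparametric structure follows from Lawson's lemma without your detour through ``Kato equality a.e.\ plus constancy of $|\mathrm{\circA}|$''; your route still closes, since $|\mathrm{\circA}|\equiv\max|\mathrm{\circA}|$ (after the connectedness argument on $\{|\mathrm{\circA}|=0\}\cup\{|\mathrm{\circA}|=\max|\mathrm{\circA}|\}$) combined with Kato equality also yields $|\nabla\mathrm{\circA}|=0$, but it is slightly more delicate and, as you note, the multiplicity $(n-1,1)$ and the restriction $r^2\leq\frac{n-1}{n}$ must then be extracted from pointwise Okumura equality together with the sign normalization of $\mathrm{H}\,\mathrm{tr}(\mathrm{\circA}^3)$, exactly as in the cited source.
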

This result should be compared to the ensuing estimate for the first eigenvalue of free-boundary CMC hypersurfaces in $\mathbb{S}^{n+1}_{+}$.
\begin{theorem}
    Let $x: \Sigma \longrightarrow \mathbb{S}^{n+1}_{+}$ be a free-boundary hypersurface with constant mean curvature $\mathrm{H}>0$ and let $\lambda_1$ denote the first eigenvalue of its Jacobi operator $J = \Delta + |\mathrm{\circA}|^2 + n\left(1+\displaystyle\frac{\mathrm{H}^2}{n^2}\right)$. Assume that $\nabla_{\eta}|\mathrm{\circA}|^2 = 0$. Then either
   \begin{enumerate}[label=(\roman*)]
       \item $\lambda_1 = -n\left(1+\displaystyle\frac{\mathrm{H}^2}{n^2}\right)$ and $\Sigma= \mathbb{S}^{n}_{+}(r) \subset \mathbb{S}^{n+1}_{+}$,
       \item or  $\lambda_1 \leq  - 2n\left(1+\displaystyle\frac{\mathrm{H}^2}{n^2}\right) +  \displaystyle\frac{\mathrm{H}(n-2)}{\sqrt{n(n-1)}}\frac{\displaystyle\int_{\Sigma}|\mathrm{\circA}|^3 d\mu}{\displaystyle\int_{\Sigma}|\mathrm{\circA}|^2 d\mu}$, with equality if and only if $\Sigma$ is half of the $\mathrm{H}$-torus.
   \end{enumerate}
\end{theorem}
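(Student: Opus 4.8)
The plan is to bound $\lambda_1$ from above by inserting a carefully chosen test function into the Rayleigh quotient \eqref{Eq.1.1}. Since $\partial\mathbb{S}^{n+1}_{+}$ is a totally geodesic equator, its second fundamental form $\mathrm{q}$ vanishes, so the variational characterization applies verbatim: $\lambda_1 = \inf\{-\int_\Sigma fJf\,d\mu \,/\, \int_\Sigma f^2\,d\mu\}$ over nonzero $f$, with $\nabla_\eta f = 0$ as the natural boundary condition. If $\Sigma$ is totally umbilical then $\circA\equiv 0$, $J$ reduces to $\Delta + n(1+H^2/n^2)$, the constant function realizes $\lambda_1 = -n(1+H^2/n^2)$, and such free-boundary hypersurfaces are the spherical caps $\mathbb{S}^n_+(r)$, giving alternative (i). Otherwise $|\circA|\not\equiv 0$, and I would take $f = |\circA|$, admissible in the $H^1$/Lipschitz completion over which the infimum is computed, even across the zero set of $\circA$.

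With $f = |\circA|$, integration by parts gives $-\int_\Sigma fJf\,d\mu = \int_\Sigma |\nabla|\circA||^2\,d\mu - \int_\Sigma(|\circA|^2 + n(1+H^2/n^2))|\circA|^2\,d\mu$, where the boundary term $\int_{\partial\Sigma}|\circA|\nabla_\eta|\circA|\,ds = \tfrac12\int_{\partial\Sigma}\nabla_\eta|\circA|^2\,ds$ vanishes by the hypothesis $\nabla_\eta|\circA|^2 = 0$. The crucial ingredient is the Simons-type identity for CMC hypersurfaces in $\mathbb{S}^{n+1}$,
\[
\tfrac12\Delta|\circA|^2 = |\nabla\circA|^2 + |\circA|^2\Big(n\big(1+\tfrac{H^2}{n^2}\big) - |\circA|^2\Big) - H\,\mathrm{tr}(\circA^3),
\]
which I would integrate over $\Sigma$: by the divergence theorem and $\nabla_\eta|\circA|^2 = 0$ the left-hand side integrates to zero, yielding an expression for $\int_\Sigma|\nabla\circA|^2$ in terms of $\int|\circA|^4$, $\int|\circA|^2$, and $\int H\,\mathrm{tr}(\circA^3)$.

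The next step combines these. Applying Kato's inequality $|\nabla|\circA||^2 \le |\nabla\circA|^2$ to the first term and substituting the integrated Simons identity, the quartic terms $\int|\circA|^4$ cancel, leaving $-\int fJf\,d\mu \le -2n(1+H^2/n^2)\int|\circA|^2 + \int H\,\mathrm{tr}(\circA^3)$. Since $H>0$, Okumura's algebraic inequality $\mathrm{tr}(\circA^3) \le \frac{n-2}{\sqrt{n(n-1)}}|\circA|^3$ controls the cubic term, and dividing by $\int|\circA|^2$ produces exactly the estimate (ii).

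For the rigidity I would trace the equalities back: equality in (ii) forces equality in Kato's inequality $\mu$-a.e., equality in Okumura's inequality pointwise, and $f=|\circA|$ to be a genuine first eigenfunction. Okumura equality forces $\circA$ to have, at each point, $n-1$ equal principal curvatures and one distinct, the eigenvalue profile of an $H$-torus; Kato equality together with the Codazzi equations pins down the parallel structure, and integrating this identifies $\Sigma$ with half of the $H$-torus $\mathbb{S}^{n-1}(r)\times\mathbb{S}^1(\sqrt{1-r^2})$, the free-boundary condition $\eta = e_{n+2}$ fixing the meeting angle and admissible radius. I expect the main obstacle to be precisely this rigidity step: making the test function $|\circA|$ legitimate near its zero set, verifying that all three equalities can hold simultaneously, and carrying out the classification of the limiting hypersurface from its pointwise eigenvalue structure.
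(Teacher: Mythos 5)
Your derivation of the inequality in (ii) is essentially the paper's argument with the same three ingredients --- the test function $f=|\mathrm{\circA}|$ in the Rayleigh quotient (legitimate since $q=0$ and the boundary term is $\tfrac12\int_{\partial\Sigma}\nabla_\eta|\mathrm{\circA}|^2\,ds=0$), the generalized Simons identity, and Okumura's lemma --- and your cancellation of the quartic terms via the integrated Simons identity does reproduce the stated bound. The packaging differs slightly: you apply the plain Kato inequality $|\nabla|\mathrm{\circA}||^2\le|\nabla\mathrm{\circA}|^2$ after integrating, whereas the paper works pointwise with the refined inequality of Al\'ias (Lemma \ref{Lemma 1.6}), $|\nabla|\mathrm{T}|^2|^2\le\frac{4n}{n+2}|\mathrm{T}|^2|\nabla\mathrm{T}|^2$, valid because $\nabla\mathrm{\circA}$ is totally symmetric (Codazzi plus $\mathrm{H}$ constant). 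This yields the stronger intermediate estimate $\lambda_1\le-\frac{2}{n+2}\frac{\int_\Sigma|\nabla\mathrm{\circA}|^2d\mu}{\int_\Sigma|\mathrm{\circA}|^2d\mu}-2n\bigl(1+\frac{\mathrm{H}^2}{n^2}\bigr)+\frac{\mathrm{H}(n-2)}{\sqrt{n(n-1)}}\frac{\int_\Sigma|\mathrm{\circA}|^3d\mu}{\int_\Sigma|\mathrm{\circA}|^2d\mu}$, from which (ii) follows by discarding the first term. Your treatment of (i) via the constant test function is the same as the paper's.

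The genuine gap is exactly where you anticipated it: the equality case. In your scheme, equality forces pointwise equality in plain Kato and in Okumura, but neither of these, separately or together, gives $\nabla\mathrm{\circA}=0$ or the constancy of $|\mathrm{\circA}|$: Okumura equality only fixes the multiplicity pattern $(n-1,1)$ of the principal curvatures at each point, and there are non-isoparametric rotational CMC hypersurfaces with precisely that pointwise structure, so ``integrating the eigenvalue profile'' does not identify the $\mathrm{H}$-torus. The paper's refined inequality is what closes this: since its constant $\frac{n}{n+2}$ is strictly less than $1$, the leftover term $\frac{2}{n+2}\int_\Sigma|\nabla\mathrm{\circA}|^2d\mu$ survives into the final estimate, and equality annihilates it, giving $\nabla\mathrm{\circA}=0$ outright; then $|\mathrm{\circA}|$ is constant, $\Sigma$ is isoparametric with two constant principal curvatures, and the classification (via \cite{Lawson}, \cite{Dacjer} and \cite{Alías 4}) produces half of the $\mathrm{H}$-torus. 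Your argument is repairable along the same lines --- pointwise equality in plain Kato combined with the refined Kato bound $|\nabla|\mathrm{\circA}||^2\le\frac{n}{n+2}|\nabla\mathrm{\circA}|^2$ already forces $\nabla\mathrm{\circA}=0$ --- but as written the rigidity step is not established.
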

\vspace{.5cm}

\noindent The paper is organized as follows. After laying out the backgrounds in section \ref{Preliminares}, in section \ref{Section 3} we obtain valuable information on the Morse index for free-boundary minimal hypersurfaces in $\mathbb{S}^{n+1}_{+}$, plus an estimate for the first eigenvalue of the Jacobi operator. 
Section \ref{Section 4} is devoted to proving upper bounds for the Morse index of free-boundary minimal hypersurfaces with positive constant mean curvature in $\mathbb{S}^{n+1}_{+}$ and for the Jacobi operator's first eigenvalue.
\bigbreak    

\noindent {\it Ackowledgements.}
This article is part of the author's PhD thesis. She would like to extend her heartfelt thanks to Detag Zhou (UFF), Luis Al\'ias (University of Murcia) and Lucas Ambrozio (IMPA) for the many enlightening conversations. I would also like to thank professors Marcos Cavalcante (UFAL), Simon Chiossi (UFF) and Helton Leal (UFF) for their wonderful suggestions. The author was supported by CAPES 

\section{Preliminaries}\label{Preliminares}

We shall begin by setting out the notation used throughout, and recall some elementary facts. We denote by  $\mathbb{S}^{n+1}$ the unit sphere in $\mathbb{R}^{n+2}$ and by $\mathbb{S}^{n+1}_{+} = \mathbb{S}^{n+1}\cap \{x_{n+2} \geq 0\}$ the upper hemisphere. Let $$x: \Sigma \longrightarrow \mathbb{S}^{n+1}_{+}$$ be the immersion of an $n$-dimensional orientable Riemannian hypersurface, possibly with boundary $\partial \Sigma$. Let us further denote
a unit normal vector of the hypersurface $\Sigma$ by $\nu$. We will refer to the above immersion, or to $\Sigma$ directly, as {\sl free boundary} if $x(\Sigma)$ meets $\partial \mathbb{S}^{n+1}_{+}$ orthogonally,  $x(\mathrm{int} \Sigma) \subset \mathrm{int}\mathbb{S}^{n+1}_{+}$ and $\partial \Sigma \subset \partial \mathbb{S}^{n+1}_{+}$. The shape operator $\mathrm{S}_{\nu}: \mathcal{X}(\Sigma) \longrightarrow \mathcal{X}(\Sigma)$   is defined by
 \begin{align*}
     \mathrm{S}_{\nu}(X) = -(\overline{\nabla}_{X}\nu)^{T},
 \end{align*}
 where $X \in T\Sigma$, $\overline{\nabla}$ is the sphere's Levi-Civita connection 
 and $(\overline{\nabla}_{X}\nu)^{T}$ is the tangential component of $\overline{\nabla}_{X}\nu$.
The mean curvature of $\Sigma$ is given by $\mathrm{H}  =  \mathrm{trace} (\mathrm{S}_{\nu})$, and when the latter is constant we say $\Sigma$ is a \textit{CMC} surface. The second fundamental form of $\Sigma$ 
 \begin{align*}
     \mathrm{A}(X,Y)  = \langle  \overline{\nabla}_{X}Y, \nu \rangle, \quad X,Y \in T\Sigma
 \end{align*}
clearly has square norm
\begin{align*}
    | \mathrm{A}|^2 = \mathrm{trace}(\mathrm{A}^2) = \sum_{i=1}^{n} {\kappa}_i^2, 
\end{align*}
where ${\kappa}_i$ are the principal curvatures. Finally, let $\mathrm{\circA} = \mathrm{A} -n\mathrm{H}\mathrm{I}$ indicate the traceless part of $A$, where $\mathrm{I}$ is the identity operator on $\mathcal{X}(\Sigma)$. Consequently
\begin{align*}
|\mathrm{\circA}|^2 = | \mathrm{A}|^2 - \frac{ \mathrm{H}^2}{n}, \  \  n \geq 2.
\end{align*}

\begin{definition}
A \textnormal{variation} of $\Sigma$ in $\mathbb{S}^{n+1}_{+}$ is a smooth map $F: (-\epsilon, \epsilon)\times \Sigma \longrightarrow \mathbb{S}^{n+1}_{+}$, for some $\epsilon>0$, such that for each $t$, $F_t : \Sigma \longrightarrow \mathbb{S}^{n+1}_{+}$, $F_t(x) = F(t,x)$, is an immersion satisfying:
\begin{enumerate}[label=(\roman*)]
    \item $F_t(\mathrm{int}(\Sigma))\subset \mathrm{int}(\mathbb{S}^{n+1}_{+})$;
    \item $F_t(\partial \Sigma) \subset \partial \mathbb{S}^{n+1}_{+}$;
    \item $F_0 = x$.
\end{enumerate}
\end{definition}
For each $t \in (-\epsilon, \epsilon)$  consider the surface $\Sigma_t = F_t(\Sigma)$ and define the area functional $\mathcal{A}: (-\epsilon, \epsilon) \longrightarrow \mathbb{R}$ by
\begin{align*}
    \mathcal{A}(t) = \int_{\Sigma}d\mu(t),
\end{align*}
where $d\mu(t)$ is the area element of $\Sigma_t$.

The first variation of $\mathcal{A}$ measures how the area of a hypersurface changes under small variations of its immersion. 
It is well known \cite[Charpter $\mathrm{I}$, Theorem $4$]{Lawson Jr} that the first variation of  $\mathcal{A}$  is 
\begin{align}
   \mathcal{A}'(0) = \frac{d}{dt}|_{t=0}\mathcal{A}(t) = \int_{\Sigma}\mathrm{H}\langle Y, \nu \rangle d\mu + \int_{\partial \Sigma}\langle Y, \eta \rangle ds,
\end{align}
where $Y = \frac{\partial F}{\partial t}(0,x)$ is the variational field, 
$\nu$ a unit vector normal to $\Sigma$, $\eta$ the outer normal to the boundary tangent to $\Sigma$, and $ds$ is the area element of $\partial\Sigma$. 

When the hypersurface $x: \Sigma \longrightarrow \mathbb{S}^{n+1}_{+}$ with constant mean curvature $\mathrm{H}$ intersects the boundary $\partial \mathbb{S}^{n+1}_{+}$ at a constant angle $\theta = \frac{\pi}{2}$, it defines a particular class of hypersurfaces with interesting geometric properties.

\begin{definition}
    A  smooth, orientable, immersed hypersurface $x:\Sigma \longrightarrow \mathbb{S}^{n+1}_{+}$  is called a \textnormal{free-boundary} hypersurface with constant mean curvature $\mathrm{H}$ if it has constant intersection angle $\theta = \frac{\pi}{2}$. In particular, when $\mathrm{H}=0$   we say $x:\Sigma \longrightarrow \mathbb{S}^{n+1}_{+}$ is \textnormal{free-boundary minimal} hypersurface.
    \end{definition}
    In both the minimal and CMC cases, free-boundary hypersurfaces are critical values of a functional. When considered as critical points, we can show that they are local minima up to a finite-dimensional space. The dimension of this space is known as the hypersurface's stability index. In other words, the stability index of a minimal or CMC free- boundary hypesurface measures the number of essential directions along which the hypersurface fails to be area-minimizing.

   In order to understand the stability properties of critical points we need to examine how $\mathcal{A}$ varies when we change $x$. This is controlled by the second variation (see \cite[Charpter $\mathrm{I}$, Theorem $32$]{Lawson Jr}) 
    \begin{align}\label{Eq. 1.2}
        \mathcal{A}''(0) = \int_{\Sigma}fJfd\mu + \int_{\partial \Sigma}f(\nabla_{\eta}f-\mathrm{q}f) ds,
    \end{align}
    where $J : C^{\infty}(\Sigma) \longrightarrow C^{\infty}(\Sigma)$ is the Jacobi operator and
    \begin{align}\label{Eq.1.3}
       \mathrm{q} = \mathrm{A}^{\partial \mathbb{S}^{n+1}_{+}}(\overline{\nu}, \overline{\nu}),
    \end{align}
    where the last equality arises because  $\Sigma$ is free-boundary.\\

        The operator $J$ induces a quadratic form $\mathrm{Q}: C^{\infty}(\Sigma) \longrightarrow \mathbb{R}$ given by
    \begin{align}\label{Eq.1.9}
        \mathrm{Q}(f) = -\int_{\Sigma}f Jf d\mu + \int_{\partial \Sigma}f(\nabla_{\eta}f-\mathrm{A}^{\partial \mathbb{S}^{n+1}_{+}}(\overline{\nu},\overline{\nu})f) ds
    \end{align}
     and hence has an associated Morse index. For the record, we recall 
that the index of a quadratic form on a vector space is the dimension of the largest subspace on which the form is negative definite. 
Intuitively, the Morse index measures the number of independent directions along which the hypersurface fails to minimize area. Indeed, if $Q(f) < 0$ for some map $f$ then $\mathcal{A}'' < 0$, indicating that in the variational family, $\mathcal{A}(\Sigma) > \mathcal{A}(\Sigma_t)$ for small $t \neq 0$.

    The boundary condition 
\begin{align*}
    \nabla_{\eta}f = \mathrm{A}^{\partial \mathbb{S}^{n+1}_{+}}(\overline{\nu},\overline{\nu})f
\end{align*}
is an elliptic PDE for the Jacobi operator. 
The eigenvalue problem
\begin{equation}\label{P. 2.6}
\left\{
\begin{array}{lll}
 Jf - \lambda f = 0 & \textrm{on} & \Sigma\\
 \nabla_{\eta}f = \mathrm{A}^{\partial \mathbb{S}^{n+1}_{+}}(\overline{\nu},\overline{\nu})f & \textrm{on} &  \partial \Sigma,
\end{array}\right.
    \end{equation}
has an orthonormal basis $\{f_k\}_{k=1}^{\infty}$ of solutions in $L^2(d\Sigma, d\mu)$, whose eigenvalues form a divergent increasing sequence $\lambda_1 < \lambda_2 <...< \lambda_k \longrightarrow \infty$.  
    Therefore the Morse index of a free-boundary $\Sigma$ equals the number of negative eigenvalues of \eqref{P. 2.6} (see \cite{Schoen}).

Now, because $J$ is elliptic, we may express the first eigenvalue $\lambda_1$ as follows
    \begin{align*}
       \lambda_1 = \inf \left\{\frac{-\int_{\Sigma}fJf d\mu + \int_{\partial\Sigma}f(\nabla_{\eta}f-\mathrm{A}^{\partial \mathbb{S}^{n+1}_{+}}(\overline{\nu},\overline{\nu})f)ds}{\int_{\Sigma}f^2 d\mu}\colon f \in C^{\infty}(\Sigma)\setminus\{0\} \right\}.
   \end{align*}

Let us call $\nu: \Sigma \longrightarrow \mathbb{S}^{n+1}$ the Gauss map. For any  constant vector $a \in \mathbb{R}^{n+2}$
 the two smooth, real-valued maps $\langle x,a \rangle$ and $\langle \nu,a \rangle$ satisfy a number of known useful relationships, listed below for future reference.

  \begin{lemma}\label{Lemma 1.1}
Let $ x:\Sigma \longrightarrow \mathbb{S}^{n+1}_{+}$ be a hypersurface with constant mean curvature $\mathrm{H}$. 
Call $\Delta$ the sphere's Laplacian operator. 
Then
\begin{gather}
\Delta \langle x ,a \rangle =  \mathrm{H}\langle\nu ,a \rangle -n\langle x ,a \rangle, \label{Eq 1.6}\\
(\Delta + | \mathrm{A}|^2) \langle \nu,a \rangle =  \mathrm{H} \langle x,a \rangle,\label{Eq 1.7}\\
J\langle x, a \rangle = |\mathrm{A}|^2 \langle x, a \rangle +  \mathrm{H}\langle \nu, a \rangle,\label{Eq 1.8}\\
J\langle \nu, a \rangle = n\langle \nu, a \rangle +  \mathrm{H}\langle x, a \rangle, \label{Eq 1.9}\\
J(n\langle x, a \rangle -  \mathrm{H}\langle \nu, a \rangle)= (n| \mathrm{A}|^2 -  \mathrm{H}^2)\langle x, a \rangle.\label{Eq 1.10}
\end{gather}
The last can be written, equivalently, 
\begin{align}\label{Eq 1.11}
&J(-\Delta \langle x, a \rangle) = (n|\mathrm{A}|^2 -  \mathrm{H}^2)\langle x, a \rangle.
\end{align}
 \end{lemma}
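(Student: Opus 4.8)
The plan is to reduce every identity to two vector-valued computations carried out in the ambient $\mathbb{R}^{n+2}$: the Laplace--Beltrami operator applied componentwise to the position map $x$ and to the Gauss map $\nu$, each regarded as an $\mathbb{R}^{n+2}$-valued function on $\Sigma$. Once \eqref{Eq 1.6} and \eqref{Eq 1.7} are in hand, the identities \eqref{Eq 1.8}--\eqref{Eq 1.11} follow by purely algebraic manipulation, using the expression $J = \Delta + |\mathrm{A}|^2 + n$ for the Jacobi operator together with the linearity of $J$ and the constancy of $\mathrm{H}$.

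First I would record the two Gauss formulas relating the flat connection $D$ of $\mathbb{R}^{n+2}$ to the sphere's connection $\overline{\nabla}$ and to the induced connection $\nabla$ on $\Sigma$. Since the outward unit normal of $\mathbb{S}^{n+1}$ is the position vector $x$ and its second fundamental form is $-\langle \cdot,\cdot\rangle$, one has $D_X Y = \overline{\nabla}_X Y - \langle X,Y\rangle x$ for $X,Y$ tangent to $\Sigma$, and hence, in a local orthonormal frame, $D_{e_i}e_j = \nabla_{e_i}e_j + \mathrm{A}(e_i,e_j)\nu - \delta_{ij}x$. Because the componentwise Laplacian of an isometric immersion equals its mean curvature vector in $\mathbb{R}^{n+2}$, tracing the normal part $\mathrm{A}(e_i,e_j)\nu - \delta_{ij}x$ gives $\Delta x = \mathrm{H}\nu - n x$; pairing with the constant vector $a$ yields \eqref{Eq 1.6} at once.

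The more delicate step is \eqref{Eq 1.7}. Here I would begin from $D_{e_i}\nu = -\mathrm{S}_{\nu}(e_i)$, which is tangent to $\Sigma$, differentiate a second time in a geodesic frame centered at a point, and substitute $D_{e_i}e_j = \mathrm{A}(e_i,e_j)\nu - \delta_{ij}x$ to separate normal from tangential parts. The normal components assemble into $-|\mathrm{A}|^2\nu + \mathrm{H}x$, while the tangential component is $-\sum_j\bigl(\sum_i \nabla_i \mathrm{A}_{ij}\bigr)e_j$. The crux is to show this tangential term vanishes: by the Codazzi equation in the space form $\mathbb{S}^{n+1}$ one rewrites $\sum_i \nabla_i \mathrm{A}_{ij} = \nabla_j \mathrm{H}$, which is zero precisely because $\Sigma$ is CMC. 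This is the one point where the hypothesis $\mathrm{H}=\mathrm{const}$ is genuinely used, and the sign and index bookkeeping in the Codazzi step is where I expect the main risk of error. Pairing the resulting $\Delta \nu = -|\mathrm{A}|^2\nu + \mathrm{H}x$ with $a$ gives \eqref{Eq 1.7}.

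Finally I would assemble the remaining identities algebraically. Adding $(|\mathrm{A}|^2+n)\langle x,a\rangle$ to \eqref{Eq 1.6} produces \eqref{Eq 1.8}, and adding $(|\mathrm{A}|^2+n)\langle \nu,a\rangle$ to \eqref{Eq 1.7} produces \eqref{Eq 1.9}; then the combination $n\cdot\eqref{Eq 1.8}-\mathrm{H}\cdot\eqref{Eq 1.9}$ cancels the $\langle \nu,a\rangle$ terms and leaves \eqref{Eq 1.10}. Since \eqref{Eq 1.6} rearranges to $-\Delta\langle x,a\rangle = n\langle x,a\rangle - \mathrm{H}\langle \nu,a\rangle$, substituting this expression into \eqref{Eq 1.10} immediately yields the equivalent form \eqref{Eq 1.11}, completing the proof.
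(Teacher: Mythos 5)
Your argument is correct; the only thing to flag is that the paper itself offers no proof of Lemma \ref{Lemma 1.1} to compare against --- the identities are introduced as ``known useful relationships, listed below for future reference,'' so your derivation fills a gap rather than paralleling one. What you give is the standard route and it checks out: tracing $D_{e_i}e_j = \nabla_{e_i}e_j + \mathrm{A}(e_i,e_j)\nu - \delta_{ij}x$ gives $\Delta x = \mathrm{H}\nu - nx$ componentwise, and the computation of $\Delta\nu$ reduces, exactly as you say, to killing the tangential term $-\sum_j\bigl(\sum_i \nabla_i\mathrm{A}_{ij}\bigr)e_j$ via the Codazzi symmetry $\sum_i\nabla_i\mathrm{A}_{ij}=\nabla_j\mathrm{H}=0$; this is indeed the only place the CMC hypothesis is used. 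You also navigate the two convention traps correctly: the paper takes $\mathrm{H}=\mathrm{trace}(\mathrm{S}_\nu)$ unnormalized, which is what makes \eqref{Eq 1.6} read $\mathrm{H}\langle\nu,a\rangle-n\langle x,a\rangle$, and the operator $J=\Delta+|\mathrm{A}|^2+n$ you use agrees with the form $\Delta+|\mathrm{\circA}|^2+n\left(1+\frac{\mathrm{H}^2}{n^2}\right)$ appearing later, since $|\mathrm{\circA}|^2=|\mathrm{A}|^2-\frac{\mathrm{H}^2}{n}$. The algebraic deduction of \eqref{Eq 1.8}--\eqref{Eq 1.10} and the rewriting \eqref{Eq 1.11} via $-\Delta\langle x,a\rangle = n\langle x,a\rangle-\mathrm{H}\langle\nu,a\rangle$ are immediate once linearity of $J$ and constancy of $\mathrm{H}$ are invoked, as you do.
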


For computing the area's second variation the following derivatives will turn out to be extremely valuable. 
\begin{lemma}\label{Lemma 1.2}
Let $\Sigma \longrightarrow \mathbb{S}^{n+1}_{+}$ be a hypersurface with constant mean curvature $\mathrm{H}$, 
$\eta$ a unit vector on $\partial \Sigma$, $\nabla$ and $\overline{\nabla}$ the Riemannian connections of $\Sigma$ and of the ambient $\mathbb{S}^{n+1}$. 
Then for any $a \in \mathbb{R}^{n+2}$
\begin{eqnarray}
\overline{\nabla}_{\eta}\nu & = &
\mathrm{A}(\eta, \eta)\eta\label{Eq 1.12}\\
\nabla_{\eta}\langle \nu,a \rangle & = &\mathrm{A}(\eta, \eta)\langle\eta, a \rangle \label{Eq 1.13}\\
\nabla_{\eta}\langle x,a \rangle & = &
\langle \eta, a \rangle.\label{Eq 1.14}
\end{eqnarray}
\end{lemma}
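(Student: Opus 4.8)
The three identities form a natural hierarchy: \eqref{Eq 1.14} is a one-line consequence of the fact that the differential of the position map is the identity, \eqref{Eq 1.13} reduces to \eqref{Eq 1.12} after a single differentiation, and \eqref{Eq 1.12} carries all the geometric content, namely that the conormal $\eta$ is a \emph{principal direction} of $\Sigma$ along $\partial\Sigma$. My plan is therefore to establish \eqref{Eq 1.12} first and then harvest the other two. I emphasize at the outset that \eqref{Eq 1.12} genuinely uses the free-boundary structure and not merely the CMC hypothesis: $\eta$ is the outer conormal of $\partial\Sigma$, and orthogonal contact identifies it with the unit normal $\overline{\nu}=e_{n+2}$ of the equatorial boundary.

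For \eqref{Eq 1.12} I would exploit the key feature of the upper hemisphere: its boundary $\partial\mathbb{S}^{n+1}_{+}=\mathbb{S}^{n+1}\cap\{x_{n+2}=0\}$ is a great sphere, hence totally geodesic in $\mathbb{S}^{n+1}$, with unit normal $\overline{\nu}=e_{n+2}$. The orthogonality of the contact angle gives, along $\partial\Sigma$, the splitting in which the conormal coincides with the boundary normal, $\eta=\overline{\nu}=e_{n+2}$, while the unit normal $\nu$ of $\Sigma$ becomes tangent to $\partial\mathbb{S}^{n+1}_{+}$. The heart of the argument is to show $\mathrm{A}(\eta,V)=0$ for every $V\in T(\partial\Sigma)$. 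Since $V$ and $\nu$ are both tangent to the totally geodesic hypersurface $\partial\mathbb{S}^{n+1}_{+}$, differentiating the $\partial\mathbb{S}^{n+1}_{+}$-tangent field $\nu$ along the boundary curve keeps the result tangent to $\partial\mathbb{S}^{n+1}_{+}$; that is, $\overline{\nabla}_{V}\nu\in T(\partial\mathbb{S}^{n+1}_{+})$, precisely because the second fundamental form of $\partial\mathbb{S}^{n+1}_{+}$ vanishes. Using $\mathrm{S}_{\nu}(V)=-(\overline{\nabla}_{V}\nu)^{T}$ and $\mathrm{A}(\eta,V)=\langle \mathrm{S}_{\nu}V,\eta\rangle=-\langle\overline{\nabla}_{V}\nu,\eta\rangle$, together with $\eta=\overline{\nu}\perp T(\partial\mathbb{S}^{n+1}_{+})$, I conclude $\mathrm{A}(\eta,V)=0$. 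Hence $\mathrm{S}_{\nu}\eta$ has no $T(\partial\Sigma)$-component, so $\eta$ is a principal direction. Finally $\langle\overline{\nabla}_{\eta}\nu,\nu\rangle=\tfrac12\eta\langle\nu,\nu\rangle=0$ shows $\overline{\nabla}_{\eta}\nu$ is tangent to $\Sigma$ and equals $-\mathrm{S}_{\nu}\eta$, a scalar multiple of $\eta$; reading off the coefficient as the normal curvature in the direction $\eta$, with the sign conventions of Section~\ref{Preliminares}, yields \eqref{Eq 1.12}.

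The remaining two identities follow by elementary computation with the flat connection $D$ of $\mathbb{R}^{n+2}$ and the Gauss formula $D_{X}W=\overline{\nabla}_{X}W-\langle X,W\rangle x$ for $\mathbb{S}^{n+1}\subset\mathbb{R}^{n+2}$. For \eqref{Eq 1.14}, since $a$ is constant and $\eta\in T\Sigma$, $\nabla_{\eta}\langle x,a\rangle=\eta\langle x,a\rangle=\langle D_{\eta}x,a\rangle=\langle\eta,a\rangle$, using $D_{\eta}x=\eta$. For \eqref{Eq 1.13}, again with $a$ constant, $\nabla_{\eta}\langle\nu,a\rangle=\langle D_{\eta}\nu,a\rangle$; the Gauss formula together with $\langle\eta,\nu\rangle=0$ gives $D_{\eta}\nu=\overline{\nabla}_{\eta}\nu-\langle\eta,\nu\rangle x=\overline{\nabla}_{\eta}\nu$, whence $\nabla_{\eta}\langle\nu,a\rangle=\langle\overline{\nabla}_{\eta}\nu,a\rangle=\mathrm{A}(\eta,\eta)\langle\eta,a\rangle$ by \eqref{Eq 1.12}.

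The main obstacle is the principal-direction step in \eqref{Eq 1.12}: everything hinges on the totally geodesic nature of the equatorial boundary and on correctly using the orthogonal contact condition to place $\nu$ in $T(\partial\mathbb{S}^{n+1}_{+})$ and $\eta$ along its normal $e_{n+2}$. Once $\mathrm{A}(\eta,V)=0$ is in hand, the rest is bookkeeping with the two ambient connections, and the only point demanding care is tracking the sign conventions fixed in Section~\ref{Preliminares} for $\mathrm{S}_{\nu}$ and $\mathrm{A}$ so that the coefficient in \eqref{Eq 1.12} comes out as stated.
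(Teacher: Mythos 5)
Your proposal is correct, and it differs from the paper in one substantive way: for the key identity \eqref{Eq 1.12} the paper gives no argument at all, simply citing Lemma~5 of Li--Xiong (which is proved there for capillary hypersurfaces in a Euclidean ball), whereas you supply a self-contained proof adapted to the hemisphere, showing that orthogonal contact with the totally geodesic equator $\partial\mathbb{S}^{n+1}_{+}$ forces $\mathrm{A}(\eta,V)=0$ for all $V\in T(\partial\Sigma)$, i.e.\ that $\eta$ is a principal direction. This is a genuine improvement in rigor, since transplanting the Li--Xiong lemma from the ball to $\mathbb{S}^{n+1}_{+}$ requires exactly the adaptation you carry out (replacing the sphere $\partial B$ by the great sphere $\{x_{n+2}=0\}$, whose total geodesy is what makes $\overline{\nabla}_{V}\nu$ stay tangent to the boundary). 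For \eqref{Eq 1.13} and \eqref{Eq 1.14} your route and the paper's are essentially the same elementary computation; your bookkeeping via the flat connection $D$ of $\mathbb{R}^{n+2}$ and the Gauss formula $D_XY=\overline{\nabla}_XY-\langle X,Y\rangle x$ is in fact cleaner than the paper's, which differentiates the constant field $a$ and projects to get $\overline{\nabla}_{\eta}a=0$. One caution on the point you flag but do not fully resolve: with the conventions actually fixed in Section~\ref{Preliminares}, namely $\mathrm{S}_{\nu}(X)=-(\overline{\nabla}_{X}\nu)^{T}$ and $\mathrm{A}(X,Y)=\langle\overline{\nabla}_{X}Y,\nu\rangle$, your argument literally yields $\overline{\nabla}_{\eta}\nu=-\mathrm{S}_{\nu}\eta=-\mathrm{A}(\eta,\eta)\eta$; the plus sign in \eqref{Eq 1.12} corresponds to the opposite (also common) convention $\mathrm{A}(X,Y)=\langle\overline{\nabla}_{X}\nu,Y\rangle$. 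This sign tension is present in the paper itself rather than being an error of yours, but your closing sentence (``reading off the coefficient \dots with the sign conventions of Section~\ref{Preliminares}'') papers over it and should instead state explicitly which convention is in force; nothing downstream is affected, since the later applications only use \eqref{Eq 1.13}--\eqref{Eq 1.14} along $\partial\Sigma$ where $\langle\eta,e_i\rangle=0$.
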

\begin{proof}
    For (\ref{Eq 1.12}) see \cite[Lemma $5$]{Li and Xiong}, so let us prove  the rest. Using the first relationship we have 
$$
      \nabla_{\eta}\langle \nu,a \rangle = \eta \langle \nu,a \rangle 
      = \langle \overline{\nabla}_{\eta}\nu, a\rangle + \langle \nu,  \overline{\nabla}_{\eta} a \rangle 
      = \mathrm{A}(\eta, \eta)\langle \eta, a\rangle + \langle \nu,  \overline{\nabla}_{\eta} a \rangle.
    $$
    Let $\nu_{\mathbb{S}^{n+1}}$ be the unit normal of $\mathbb{S}^{n+1}$. Then $\overline{\nabla}_{\eta} a = \nabla^{\mathbb{R}^{n+2}}_{\eta}a - \langle  \nabla^{\mathbb{R}^{n+2}}_{\eta}a, \nu_{\mathbb{S}^{n+1}} \rangle  \nu_{\mathbb{S}^{n+1}} = 0$, and   we obtain in one go the two covariant derivatives
    \begin{equation*}\begin{split}
        \nabla_{\eta}\langle \nu,a \rangle &= \mathrm{A}(\eta, \eta)\langle \eta, a\rangle\\
\nabla_{\eta}\langle x,a \rangle &= \eta \langle x, a \rangle 
         = \langle \overline{\nabla}_{\eta}x, a\rangle + \langle x, \overline{\nabla}_{\eta}a\rangle
        = \langle \overline{\nabla}_{\eta}x, a\rangle.
    \end{split}\end{equation*}
    Finally, from  $\nabla^{\mathbb{R}^{n+2}}_{\eta} x = \eta$ we deduce 
$         \nabla_{\eta}\langle x,a \rangle = \langle \eta, a \rangle$.
\end{proof}

\section{Free-boundary minimal hypersurfaces in $\mathbb{S}^{n+1}_{+}$}\label{Section 3}
In this section we provide a classification theorem for free-boundary minimal hypersurfaces in the upper hemisphere. We obtain that for $|\mathrm{A}| \leq n$ the hypersurfaces are either totally geodesic or they must be the upper minimal Clifford torus. This is on a par with  \cite[Theorem $6$]{Alías 1}.
\begin{lemma}\label{Lemma 3.1}
    Let $x: \Sigma \longrightarrow \mathbb{S}^{n+1}_{+}$ be a free-boundary minimal hypersurface and assume that $\nabla_{\eta}|\mathrm{A}|^2 = 0$. If $|\mathrm{A}|^2 \leq n$ then either $|\mathrm{A}| = 0$ or $|\mathrm{A}|^2 = n$. In particular, if $|\mathrm{A}|^2 = n$ then $x: \Sigma \longrightarrow \mathbb{S}^{n+1}_{+}$ is the upper minimal Clifford torus.
\end{lemma}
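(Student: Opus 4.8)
The plan is to reduce the statement to the classical closed-manifold pinching argument by showing that the free-boundary hypothesis exactly annihilates the boundary term that would otherwise obstruct integration by parts. First I would recall Simons' equation for a minimal hypersurface in the unit sphere $\mathbb{S}^{n+1}$, namely $\tfrac{1}{2}\Delta|\mathrm{A}|^2 = |\nabla\mathrm{A}|^2 + |\mathrm{A}|^2(n-|\mathrm{A}|^2)$ (cf. \cite{Simons}). Integrating this identity over the compact hypersurface $\Sigma$ and applying the divergence theorem turns the left-hand side into $\tfrac{1}{2}\int_{\partial\Sigma}\nabla_{\eta}|\mathrm{A}|^2\,ds$, which vanishes by the assumption $\nabla_{\eta}|\mathrm{A}|^2 = 0$. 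Hence I obtain the integral identity $\int_{\Sigma}|\nabla\mathrm{A}|^2\,d\mu + \int_{\Sigma}|\mathrm{A}|^2(n-|\mathrm{A}|^2)\,d\mu = 0$.

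Next I would exploit the pinching assumption. Since $|\mathrm{A}|^2\le n$, the integrand $|\mathrm{A}|^2(n-|\mathrm{A}|^2)$ is pointwise non-negative, and so is $|\nabla\mathrm{A}|^2$; a sum of two non-negative integrals that vanishes forces each to vanish identically. Thus $|\mathrm{A}|^2(n-|\mathrm{A}|^2)\equiv 0$ and $\nabla\mathrm{A}\equiv 0$ on $\Sigma$. The first equation gives $|\mathrm{A}|^2\in\{0,n\}$ at every point, and continuity of $|\mathrm{A}|^2$ together with connectedness of $\Sigma$ promotes this pointwise dichotomy to the global alternative $|\mathrm{A}|\equiv 0$ or $|\mathrm{A}|^2\equiv n$.

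For the final assertion, when $|\mathrm{A}|^2 = n$ the vanishing $\nabla\mathrm{A}\equiv 0$ says the second fundamental form is parallel. I would then invoke the classification of minimal hypersurfaces of $\mathbb{S}^{n+1}$ with $|\mathrm{A}|^2 = n$ in the form used in \cite[Theorem $6$]{Alías 1}, identifying the underlying minimal hypersurface with a Clifford torus $\mathbb{S}^{k}(\sqrt{k/n})\times\mathbb{S}^{n-k}(\sqrt{(n-k)/n})$. To land inside the hemisphere I would restrict to the portion meeting $\partial\mathbb{S}^{n+1}_{+}$ and check, using \eqref{Eq 1.13}--\eqref{Eq 1.14} and $\eta = e_{n+2}$, that this piece intersects $\partial\mathbb{S}^{n+1}_{+}$ orthogonally, producing the upper minimal Clifford torus \eqref{eq:1/2-Clifford}.

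The integral estimate is the clean part, precisely because $\nabla_{\eta}|\mathrm{A}|^2 = 0$ collapses the problem to the closed case. I expect the main obstacle to be the rigidity step: one must ensure that the local product structure forced by $\nabla\mathrm{A} = 0$ globalizes correctly under the free-boundary condition, and that the resulting half-torus genuinely satisfies the orthogonal-intersection requirement along $\partial\mathbb{S}^{n+1}_{+}$ rather than merely being an abstract quotient of a Clifford torus. Some care is also needed to justify connectedness of $\Sigma$ (or to argue componentwise), since the continuity argument that upgrades the pointwise dichotomy to a global one relies on it.
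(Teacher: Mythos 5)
Your proposal follows essentially the same route as the paper: integrate Simons' equation, use Stokes together with $\nabla_{\eta}|\mathrm{A}|^2=0$ to kill the boundary term, and conclude from the non-negativity of $|\nabla\mathrm{A}|^2$ and $|\mathrm{A}|^2(n-|\mathrm{A}|^2)$ that both vanish. The only (cosmetic) difference is in the final rigidity step, where the paper invokes Lawson's two-principal-curvatures lemma plus Cartan's isoparametric theorem while you cite the $|\mathrm{A}|^2=n$ classification directly; your added remarks on connectedness and on verifying the orthogonal intersection are sensible refinements of the same argument.
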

\begin{proof}
    We start by proving the first part claim. By Simons's equation we have
    \begin{align}\label{Eq. 3.1}
        \frac{1}{2}\Delta |\mathrm{A}|^2 = |\nabla \mathrm{A}|^2 + |\mathrm{A}|^2(n-|\mathrm{A}|^2).
    \end{align}
    Integrating the above over  $\Sigma$, we obtain
    \begin{align}\label{Eq. 3.2}
         \frac{1}{2}\int_{\Sigma}\Delta |\mathrm{A}|^2 d\mu = \int_{\Sigma}|\nabla \mathrm{A}|^2 d\mu + \int_{\Sigma}|\mathrm{A}|^2(n-|\mathrm{A}|^2) d\mu,
    \end{align}
    and Stokes' theorem on the left side gives
    \begin{align}\label{Eq. 3.3}
        \frac{1}{2}\int_{\partial \Sigma}\nabla_{\eta}\mathrm{A}|^2 ds = \int_{\Sigma}|\nabla \mathrm{A}|^2 d\mu + \int_{\Sigma}|\mathrm{A}|^2(n-|\mathrm{A}|^2) d\mu.
    \end{align}
    Using the hypothesis on equation \eqref{Eq. 3.3}, 
    \begin{align}\label{Eq. 3.4}
        \int_{\Sigma}|\nabla \mathrm{A}|^2 d\mu + \int_{\Sigma}|\mathrm{A}|^2(n-|\mathrm{A}|^2) d\mu = 0.
    \end{align}
     Since $|\nabla\mathrm{A}|^2$ and $|\mathrm{A}|^2(n-|\mathrm{A}|^2)$ are non-negative, by \eqref{Eq. 3.1} we see that
        \begin{align}\label{Eq. 3.4}
            |\nabla\mathrm{A}|^2 = 0
            \end{align}
            \begin{align}\label{Eq. 3.5}
                |\mathrm{A}|^2(n-|\mathrm{A}|^2) =0.
            \end{align}
             Now, \eqref{Eq. 3.4} and  (\ref{Eq. 3.5}) force $\mathrm{A}$ to be parallel, so either $|\mathrm{A}|=0$ or $|\mathrm{A}|^2 = n$.
              Let us prove the last part. Suppose that $|\mathrm{A}| = \sqrt{n}\neq 0$. As $|\nabla\mathrm{A}|=0$, it follows that $\Sigma$ has exactly two principal curvatures \cite[Lemma $1$]{Lawson} 
         \begin{align*}
             \pm \sqrt{\frac{n-k}{n}} \  \ \textrm{and} \  \  \mp\sqrt{\frac{n}{n-k}}
         \end{align*}
         with multiplicity $k \geq 1$ and $n-k \geq 1$, respectively. Since $\mathrm{H} = 0$ and $\Sigma$ is isoparametric, a celebrated theorem of Cartan's \cite{Cartan} ensures $\Sigma$ is a piece of  Clifford torus.
\end{proof}
\begin{remark}
     Totally geodesic hypersurfaces, and Clifford tori, satisfy the condition $\nabla_{\eta}|\mathrm{A}|^2 = 0$.  
    \end{remark}
   Now we shall obtain information about the Morse index of free-boundary minimal hypersurfaces in the upper hemisphere. 
   \begin{theorem}\label{Theorem 3.2}
    Let $x: \Sigma \longrightarrow \mathbb{S}^{n+1}_{+}$ be a free-boundary minimal hypersurface and $\eta = e_{n+2}$ the conormal unit vector of $\partial \Sigma$. Suppose that $|\mathrm{A}|$ is constant. Then there are three possibilities: 
    \begin{enumerate}[label=(\roman*)]
    \item   $\mathrm{MI}(\Sigma) = 1$ and  $\Sigma=\mathbb{S}^{n}_{+} \subset \mathbb{S}^{n+1}_{+}$;
    \item  $\mathrm{MI}(\Sigma) = n+1$ and $\Sigma$ is the upper minimal Clifford torus:
    \begin{equation}\label{eq:1/2-Clifford}
        \Sigma=\mathbb{S}^{k}_{+}\left(\sqrt{\frac{k}{n}}\right) \times \mathbb{S}^{n-k}_{+}\left(\sqrt{\frac{n-k}{n}}\right);  
    \end{equation}
    \item  $\mathrm{MI}(\Sigma) \geq 2(n+1)$.
    \end{enumerate}
\end{theorem}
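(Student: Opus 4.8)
The plan is to separate three regimes according to the constant value of $|\mathrm{A}|$, prove a clean lower bound for $\mathrm{MI}(\Sigma)$ in each by exhibiting an explicit subspace on which $Q$ is negative definite, and then match it from above in the two rigid cases by a doubling argument. First, since $|\mathrm{A}|$ is constant we have $\nabla_\eta|\mathrm{A}|^2=0$, so Lemma~\ref{Lemma 3.1} together with Simons' identity \eqref{Eq. 3.1} shows that only $|\mathrm{A}|=0$, $|\mathrm{A}|^2=n$, or $|\mathrm{A}|^2>n$ can occur, the borderline $|\mathrm{A}|^2=n$ forcing the upper minimal Clifford torus. Because $\partial\mathbb{S}^{n+1}_+$ is totally geodesic we have $\mathrm{q}=0$, so \eqref{P. 2.6} is the Neumann problem, $Q(f)=-\int_\Sigma fJf\,d\mu+\int_{\partial\Sigma}f\,\nabla_\eta f\,ds$, and $\mathrm{MI}(\Sigma)$ counts the negative Neumann eigenvalues of $J$.

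The source of negative directions is Lemma~\ref{Lemma 1.1}: with $\mathrm{H}=0$ it yields $J\langle x,a\rangle=|\mathrm{A}|^2\langle x,a\rangle$ from \eqref{Eq 1.8} and $J\langle\nu,a\rangle=n\langle\nu,a\rangle$ from \eqref{Eq 1.9}, while $J1=(|\mathrm{A}|^2+n)1$. By Lemma~\ref{Lemma 1.2}, for $a$ in the hyperplane $e_{n+2}^{\perp}\cong\mathbb{R}^{n+1}$ the relations \eqref{Eq 1.13}--\eqref{Eq 1.14} give $\nabla_\eta\langle x,a\rangle=\langle\eta,a\rangle=0$ and $\nabla_\eta\langle\nu,a\rangle=\mathrm{A}(\eta,\eta)\langle\eta,a\rangle=0$ on $\partial\Sigma$ (using $\eta=e_{n+2}$). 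Hence the eigenspaces $V_2=\mathrm{span}\{\langle x,a\rangle:a\in\mathbb{R}^{n+1}\}$ and $V_1=\mathrm{span}\{\langle\nu,a\rangle:a\in\mathbb{R}^{n+1}\}$ consist of admissible test functions, each making $Q$ strictly negative.

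For the lower bounds I would argue case by case. If $\Sigma$ is totally geodesic then $\nu$ is constant and $\langle x,a\rangle\in\ker J$, so the constant is the only strictly negative direction and $\mathrm{MI}(\Sigma)\ge 1$. If $|\mathrm{A}|^2=n$, the factor coordinates of the torus identify $V_1$ and $V_2$ as a single $(n+1)$-dimensional eigenspace, giving $\mathrm{MI}(\Sigma)\ge n+1$. If $|\mathrm{A}|^2>n$, the eigenvalues $n$ and $|\mathrm{A}|^2$ are distinct, so $V_1\perp V_2$; a vanishing relation among the $\langle x,a\rangle$ (resp.\ the $\langle\nu,a\rangle$) would force $\Sigma$ into a totally geodesic equator (resp.\ its Gauss image into a hyperplane, incompatible with $|\mathrm{A}|^2>n$ since a kernel direction would be a nowhere-zero principal direction of zero curvature), so $\dim V_1=\dim V_2=n+1$ and $Q$ is negative definite on $V_1\oplus V_2$, whence $\mathrm{MI}(\Sigma)\ge 2(n+1)$.

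The delicate part is the reverse inequality in cases (i) and (ii). Here I would double $\Sigma$ across the equator $\partial\mathbb{S}^{n+1}_+$: free-boundary orthogonality makes $\widehat\Sigma=\Sigma\cup\sigma(\Sigma)$ a smooth closed minimal hypersurface of $\mathbb{S}^{n+1}$, and the Neumann spectrum of $J$ on $\Sigma$ is exactly the $\sigma$-even part of the spectrum of $J$ on $\widehat\Sigma$. For $\widehat\Sigma=\mathbb{S}^{n}$ the unique negative eigenfunction is the ($\sigma$-even) constant, so $\mathrm{MI}(\Sigma)=1$; for the closed Clifford torus one feeds in the explicit description of its Jacobi spectrum (Simons, Urbano, Al\'ias) and isolates the even part. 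I expect this spectral bookkeeping to be the main obstacle: one must show that the test functions above already exhaust the negative eigenspace and compute the exact dimension of its $\sigma$-even part, since it is this count—rather than any soft inequality—that pins down the precise values $1$ and $n+1$ and separates them from the generic bound $2(n+1)$.
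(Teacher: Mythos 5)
Your skeleton largely shadows the paper's: the trichotomy $|\mathrm{A}|=0$, $|\mathrm{A}|^2=n$, $|\mathrm{A}|^2>n$ comes from Lemma \ref{Lemma 3.1}, and the lower bound $2(n+1)$ in the third case is obtained exactly as in the paper, from the Neumann-admissible test functions $\langle x,a\rangle$ and $\langle\nu,a\rangle$ ($a\perp e_{n+2}$), which are $J$-eigenfunctions with distinct eigenvalues $|\mathrm{A}|^2\neq n$ and hence span orthogonal $(n+1)$-dimensional negative subspaces. Where you diverge is in pinning down the exact index in cases (i) and (ii): the paper computes the Neumann spectrum of $\Delta$ on $\mathbb{S}^n_+$ and on the half Clifford torus directly, while you reflect across the totally geodesic equator and take the $\sigma$-even part of the closed spectrum. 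These two mechanisms are equivalent, so the choice is cosmetic; the reflection principle you invoke does hold for a minimal hypersurface meeting a totally geodesic hypersurface orthogonally, and it settles case (i) cleanly (the only even negative direction on $\mathbb{S}^n$ is the constant).

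The genuine gap is that you stop exactly where the content of statement (ii) lies: you never carry out the ``spectral bookkeeping'' for the Clifford torus, and you yourself flag it as the main obstacle. Worse, if one does carry it out along your route the answer does not come out to $n+1$. On the closed torus $\mathbb{S}^{k}(\sqrt{k/n})\times\mathbb{S}^{n-k}(\sqrt{(n-k)/n})$ one has $J=\Delta+2n$, and the negative eigenspace is spanned by the constant (eigenvalue $-2n$) together with the $n+2$ linear coordinates $x_1,\dots,x_{n+2}$ (Laplace eigenvalue $n$, hence $J$-eigenvalue $-n$). Under the reflection $x_{n+2}\mapsto-x_{n+2}$ only $x_{n+2}$ is odd, so the even negative eigenspace has dimension $1+(n+1)=n+2$, not $n+1$. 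Your own lower bound in case (ii) already shows the same thing if you do not discard the constant: $Q(1)=-2n\,\mathrm{Area}(\Sigma)<0$ and $1$ is $L^2$-orthogonal to the coordinate functions, so the test functions you exhibit give $\mathrm{MI}(\Sigma)\ge n+2$ for the half torus. (The paper's count reaches $n+1$ only by assigning multiplicity $n-k$, rather than $n-k+1$, to the first nonzero Laplace eigenvalue of the \emph{closed} factor $\mathbb{S}^{n-k}$, i.e.\ by treating both factors as hemispheres, which is not what the half Clifford torus $\mathbb{S}^{k}\times\mathbb{S}^{n-k}_{+}$ is.) So as written your proposal cannot confirm the value $n+1$ in (ii); to complete it you must either exhibit the full even spectrum and resolve this count, or explain which negative direction is to be excluded and why. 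A secondary, smaller gap: in case (iii) your argument that no $\langle\nu,a\rangle$ vanishes identically (``a kernel direction of zero curvature is incompatible with $|\mathrm{A}|^2>n$'') is only a heuristic; the clean statement is that $\langle\nu,a\rangle\equiv0$ forces $A(a^{\top})=0$ and ultimately that $\Sigma$ is an equator, which is the content of the Perdomo lemma the paper cites and which you should invoke or prove.
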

\begin{proof}
    Suppose $x: \Sigma \longrightarrow \mathbb{S}^{n+1}_{+}$ is not totally geodesic. Define the vector subspaces $E_1 = \mathrm{span}\{\langle x,e_i \rangle: i=1,...,n+1\}$ and $E_2 = \mathrm{span}\{\langle \nu,e_i \rangle: i=1,...,n+1\}$  of $C^{\infty}(\Sigma)$.
We know that $\dim E_1 = \dim E_2 = n+1$ [see \cite[Theorem $3.1$]{Perdomo}].
    Since $ E_1 \cap E_2 = \{0\}$, define $E = E_1 \oplus E_2$. Let us show that the symmetric bilinear form $\mathrm{Q}|_{E}$ is negative definite. Given $f = f_1 + f_2 \in  E - \{0\}$, with $f_1 \in E_1,\ f_2 \in E_2$, we have
  \begin{align}
   \nonumber    \mathrm{Q}(f) &= -\int_{\Sigma}fJf d\mu + \int_{\partial\Sigma}f(\nabla_{\eta}f - \mathrm{q}f)ds\\
     \nonumber  &= -\int_{\Sigma}fJf d\mu\\
     \nonumber  &= -\int_{\Sigma}(f_1 + f_2)J(f_1+f_2) d\mu\\
    \label{Eq. 3.8} &= -\int_{\Sigma}f_1Jf_1 d\mu -\int_{\Sigma}f_1Jf_2 d\mu -\int_{\Sigma}f_2Jf_1 d\mu -\int_{\Sigma}f_2Jf_2 d\mu.
  \end{align}
In the second equality we used Lemma \ref{Lemma 1.2}.
  
  Let us prove the following.
  
  \begin{Claim}\label{Claim 1}
      For any $f_1 \in E_1, f_2 \in E_2$, we have $-\int_{\Sigma}f_1 Jf_2 d\mu - \int_{\Sigma}f_2 Jf_1 d\mu = 0$.
  \end{Claim}
  \begin{proof}
      Since $\Sigma$ is a minimal hypersurface, Lemma \ref{Lemma 1.1} says that 
      \begin{align}\label{Eq. 3.9}
          Jf_1 = |\mathrm{A}|^2 f_1
      \end{align}
      \begin{align}\label{Eq. 3.10}
          Jf_2 = n f_2
      \end{align}
      By equations \eqref{Eq. 3.9}-\eqref{Eq. 3.10}, $|\mathrm{A}|$ and $n$ are constant eigenvalues of $J$. Since $|\mathrm{A}|^2 \neq n$
      \begin{align*}
          -\int_{\Sigma}f_1 Jf_2 d\mu - \int_{\Sigma}f_2 Jf_1 d\mu =  -\int_{\Sigma}  (|\mathrm{A}|^2 +n) f_1f_2 d\mu = 0,
          \end{align*}
          as  desired.
  \end{proof}
  Returning to the proof of the theorem, equations \eqref{Eq. 3.8}, \eqref{Eq. 3.9} and \eqref{Eq. 3.10} force 
  \begin{align*}
      \mathrm{Q}(f) &=  -\int_{\Sigma}f_1 Jf_1 d\mu  -\int_{\Sigma}f_2 Jf_2 d\mu\\
      &= -\int_{\Sigma}|\mathrm{A}|^2 f_1^2 d\mu  -n\int_{\Sigma} f_2^2 d\mu < 0.
  \end{align*}
  Thus $\mathrm{Q}|_{E}$ is negative definite, and therefore 
  \begin{align*}
      \mathrm{MI}(\Sigma) \geq \dim E = \dim(E_1\oplus E_2) = 2(n+1),
  \end{align*}
  proving $(iii)$.\\

  Now, if $\Sigma$ is  totally geodesic  the Jacobi operator reduces to $J = \Delta + n$, where $\Delta$ is the Laplacian operator on the unit sphere $\Sigma = \mathbb{S}^n_{+}$. The Neumann problem's eigenvalues equal $\lambda_i = \mu_i - n$, where $\mu_i$ denotes the i-th eigenvalue of the Laplacian on $\Sigma = \mathbb{S}^n_{+}$, with the same multiplicity. 

        The eigenvalues of the Laplacian on $\mathbb{S}^n_{+}$ are
        \begin{align*}
            \mu_i = (i-1)(n+i-2); i=1,2,...
        \end{align*}
        with multiplicities
        \begin{align*}
            m_{\mu_1} = 1, m_{\mu_2} = n
        \end{align*}
        and
        $m_{\mu_k}=$ $\begin{pmatrix}
          n+k-2 &\\
          k-2 
\end{pmatrix}$ - $\begin{pmatrix}
          n+k-4 &\\
          k-4 
\end{pmatrix}$, with $k=4,5,...$
\begin{itemize}
    \item [Case $1$.] If $i=1$ then $\mu_1 = 0$ and so $\lambda_1 = -n < 0$ with multiplicity $1$.
    \item [Case $2$.]If $i=2$ then $\mu_2 = n$, giving $\lambda_2 = 0$
    \item [Case $3$.] If $i=3$ then $\mu_3 = 2(n+1)$ and  $\lambda_3 = n+2 > 0$.
\end{itemize}
Therefore if $k \geq 2$ then $\lambda_k \geq 0$. 
Since $\Sigma$ is minimal, Lemma \ref{Lemma 1.1} implies $J\langle \nu, e_i \rangle = n \langle \nu, e_i \rangle$, for $e_i \in T\Sigma$ and $i=1,...,n+1$. Choose $\eta = e_{n+2}$. Lemma \ref{Lemma 1.2} tells 
\begin{align*}
    \nabla_{\eta}\langle \nu, e_i \rangle = \mathrm{A}(\eta, \eta)\langle \eta, e_i \rangle = 0.
\end{align*}
Hence the Neumann problem is satisfied.
Furthermore, $\mathrm{MI}(\Sigma) = 1$, proving $(i)$.\\

Every Clifford torus has $|\mathrm{A}|^2 = n$. Then the Jacobi operator reduces to $J = \Delta + 2n$, where $\Delta$ is the Laplacian operator of $\Sigma = \mathbb{S}^{k}_{+}(\sqrt{\frac{k}{n}}) \times \mathbb{S}^{n-k}(\sqrt{\frac{n-k}{n}})$. The eigenvalues of the Neumann problem are 
\begin{align*}
    \lambda_- = \mu_i - 2n,
\end{align*}
where $\mu_i$ is the i-th eigenvalue of $\Delta$. Hence the index of $\Sigma$ equals the number of eigenvalues of $\Delta$ counted with multiplicities.

  Let us to compute it. Suppose $\alpha$ is an eigenvalue of the Laplacian operator on a Riemannian manifold $M$ with multiplicity $m_{\alpha}$, and $\beta$ an eigenvalue of the Laplacian on a Riemannian manifold $N$ with multiplicity $m_{\beta}$. Then $\mu = \alpha + \beta$ is an eigenvalue of the Laplacian of the Riemannian product $M  \times N$, with multiplicity $m_{\alpha}m_{\beta}$.\\

  Notice that the eigenvalues of the Laplacian of $\mathbb{S}^{k}_{+}(\sqrt{\frac{k}{n}})$ are 
  \begin{align*}
      \alpha_i = \frac{n(i-1)(k+i-2)}{k}, i=1,2,3,...
  \end{align*}
  with multiplicities $m_{\alpha_1}=1$, $m_{\alpha_2}=k$,...
   $m_{\alpha_k}=$ $\begin{pmatrix}
          k+i-2 &\\
          i-2 
\end{pmatrix}$ - $\begin{pmatrix}
          k+i-4 &\\
          i-4 
\end{pmatrix}$, with $i=4,5,...$
and the Laplacian's eigenvalues on $\mathbb{S}^{n-k}(\sqrt{\frac{n-k}{n}})$ are given by
\begin{align*}
      \beta_j = \frac{k(j-1)(n-k+j-2)}{n-k}, j=1,2,3,...
  \end{align*}
  with multiplicities $m_{\beta_1}=1$, $m_{\beta_2}=k$,...
   $m_{\alpha_k}=$ $\begin{pmatrix}
          n-k+j-2 &\\
          j-2 
\end{pmatrix}$ - $\begin{pmatrix}
          n-k+j-4 &\\
          j-4 
\end{pmatrix}$, for $j=4,5,...$
\begin{itemize}
    \item [Case $1$.] If $i=1=j$ then $\mu_1 = 0$ and $\lambda_1 = -2n < 0$ with multiplicity $1$.
    \item [Case $2$.]If $i=2$ and $j=2$ then $\mu_2 = n$ and $\lambda_2 = -n$ with multiplicity $k$
    \item [Case $4$.] If $i=2$ and $j=1$ then $\mu_2 = n$ and $\lambda_2 = -n$ with multiplicity $n-k$.
    \item [Case $5$.] If $i=2$ and $j=2$ then $\mu_3 = 2n$ and $\lambda_3 = 0$.
\end{itemize}
Consequently if $k \geq 3$ then $\lambda_k \geq 0$. Moreover $\mathrm{MI}(\Sigma) = n+1$, proving $(ii)$.
\end{proof}

\subsection{A Sharp Estimate for the Eigenvalue of Jacobi Operator on Minimal Free-Boundary Hypersurfaces in $\mathbb{S}^{n+1}_{+}$}\label{Subsection 3.1}
In \cite{Perdomo} Perdomo showed that when $\lambda_1 = -2n$, $\Sigma$ is the minimal Clifford torus. To prove this theorem he  used the maximum principle. In \cite[Theorem $3$]{Alías 1} Alías gave an estimate of the first eigenvalue of  compact minimal hypersurfaces. Continuing along this thread, let us prove an upper bound for $\lambda_1$ in the following theorem.

\begin{theorem}
 Let $x: \Sigma \longrightarrow \mathbb{S}^{n+1}_{+}$ be a free-boundary minimal hypersurface such that $\nabla_{\eta}|\mathrm{A}|^2 = 0$. Then either
 \begin{enumerate}[label=(\roman*)]
 \item  $\lambda_1 \leq -2n $, with equality if and only if $\Sigma$ is \eqref{eq:1/2-Clifford} or
 \item $\lambda_1 = -n$ and $\Sigma$ is totally geodesic.
 \end{enumerate}
\end{theorem}
\begin{proof}
     In Simons' equation \cite{Simon} 
   \begin{align}\label{Eq. 1.26}
       \frac{1}{2}\Delta |\mathrm{A}|^2 = |\nabla \mathrm{A}|^2 + |\mathrm{A}|^2 (n- |\mathrm{A}|^2).
   \end{align}
the left-hand side equals 
   \begin{align}\label{Eq. 1.27}
       \frac{1}{2}\Delta |\mathrm{A}|^2 = |\mathrm{A}|\Delta |\mathrm{A}|+|\nabla |\mathrm{A}||^2.
   \end{align}
   By (\ref{Eq. 1.26})-(\ref{Eq. 1.27})
   \begin{align*}
       |\mathrm{A}|\Delta |\mathrm{A}| + |\nabla |\mathrm{A}||^2 &= |\nabla \mathrm{A}|^2 + |\mathrm{A}|^2 (n- |\mathrm{A}|^2).
   \end{align*}
   This implies that
   \begin{align}\label{Eq. 1.28}
       |\mathrm{A}|\Delta|\mathrm{A}| = |\nabla \mathrm{A}|^2 + n|\mathrm{A}|^2 - |\mathrm{A}|^4 - |\nabla |\mathrm{A}||^2.
   \end{align}

    For the next step we use the following auxiliary result, which can be found in \cite[Lemma $1$]{Alías 3}.
    \begin{lemma}\label{Lemma 1.6}
        Let $\Sigma$ be a Riemannian manifold. Consider a symmetric tensor $\mathrm{T}: \mathcal{X}(\Sigma) \longrightarrow \mathcal{X}(\Sigma)$ such that $\mathrm{tr}(\mathrm{T})=0$ and the covariant derivative $\nabla \mathrm{T}$ is symmetric. Then
        \begin{align*}
            |\nabla|\mathrm{T}|^2|^2 \leq \frac{4n}{n+2}|\mathrm{T}|^2|\nabla \mathrm{T}|^2.
        \end{align*}
    \end{lemma}
    Returning to the proof of the theorem, by equation (\ref{Eq. 1.28})
    \begin{align*}
        &|\mathrm{A}|\Delta |\mathrm{A}| + n|\mathrm{A}|^2 + |\mathrm{A}|^4 = |\nabla \mathrm{A}|^2 + 2n|\mathrm{A}|^2 - |\nabla |\mathrm{A}||^2\\
        &|\mathrm{A}|(\Delta|\mathrm{A}|+ |\mathrm{A}|^3 + n|\mathrm{A}|) = |\nabla \mathrm{A}|^2 + 2n|\mathrm{A}|^2 - |\nabla |\mathrm{A}||^2,
    \end{align*}
    and therefore
    \begin{align}\label{Eq. 1.29}
        |\mathrm{A}|J|\mathrm{A}| = |\nabla \mathrm{A}|^2 + 2n|\mathrm{A}|^2 - |\nabla |\mathrm{A}||^2.
    \end{align}
    Notice that $\nabla |\mathrm{A}|^2 = 2|\mathrm{A}|\nabla |\mathrm{A}|$ and $|\nabla |\mathrm{A}|^2|^2 = 4 |\mathrm{A}|^2 |\nabla |\mathrm{A}||^2$. Take $|\mathrm{T}|^2 = |\mathrm{A}|^2$, so Lemma \ref{Lemma 1.6} implies
    \begin{align}\label{Eq 1.30}
      |\nabla |\mathrm{A}|^2|^2 = 4 |\mathrm{A}|^2 |\nabla |\mathrm{A}||^2  \leq \frac{4n}{n+2}|\mathrm{A}|^2|\nabla \mathrm{A}|^2  
    \end{align}
 and then
    \begin{align}\label{Eq 1.31}
       |\nabla |\mathrm{A}|^2|^2 \leq \frac{n}{n+2}|\nabla \mathrm{A}|^2. 
    \end{align}
    Using (\ref{Eq 1.31}) 
    \begin{align*}
        |\mathrm{A}|J |\mathrm{A}| &\geq |\nabla \mathrm{A}|^2 - \frac{n}{n+2}|\nabla \mathrm{A}|^2 + 2n|\mathrm{A}|^2\\
        &= \frac{2}{n+2}|\nabla \mathrm{A}|^2 + 2n|\mathrm{A}|^2,
    \end{align*}
    so
    \begin{align*}
       - |\mathrm{A}|J|\mathrm{A}| &\leq -\frac{2}{n+2}|\nabla \mathrm{A}|^2 - 2n|\mathrm{A}|^2\\
       & \leq -2n||\mathrm{A}|^2.
    \end{align*}
    Note that $|\mathrm{A}| \neq 0$ and $\lambda_1 \leq \dfrac{-\int_{\Sigma}|\mathrm{A}|J |\mathrm{A}|d\mu+\int_{\partial \Sigma}|\mathrm{A}|(\nabla_{\eta}|\mathrm{A}|-q|\mathrm{A}|^2)ds}{\int_{\Sigma}|\mathrm{A}|d\mu}$. 
    
   Since $q = 0$ and $|\mathrm{A}|\nabla_{\eta}|\mathrm{A}| = \frac{1}{2}\nabla_{\eta}|\mathrm{A}|^2$ we obtain 
    \begin{align*}
        \lambda_1 \leq -\dfrac{\int_{\Sigma}|\mathrm{A}|J |\mathrm{A}|d\mu}{\int_{\Sigma}|\mathrm{A}|^2d\mu}.
    \end{align*}
    Therefore
    \begin{align*}
        \lambda_1 \leq -\frac{2}{n+2}\dfrac{\int_{\Sigma}|\nabla \mathrm{A}|^2 d\mu}{\int_{\Sigma}|\mathrm{A}|^2d\mu} - 2n.
    \end{align*}
     In particular, since $|\nabla \mathrm{A}|^2$ and $|\mathrm{A}|^2$ are non-negative, $\lambda_1 \leq -2n$, proving the first part of $(i)$. Let us prove the last part of $(i)$.\\

     We have
\begin{align}\label{Eq. 1.31}
    \lambda_1 \leq -2n - \frac{2}{n+2}|\nabla \mathrm{A}|^2 \leq -2n.
\end{align}
    If $\lambda_1 = -2n$, by (\ref{Eq. 1.31}),
    \begin{align*}
        |\nabla \mathrm{A}| = 0.
    \end{align*}
    By Lemma \ref{Lemma 1.6}, $0 \leq |\nabla |\mathrm{A}||^2 \leq \frac{4n}{n+2}|\nabla \mathrm{A}|^2 = 0$, and so 
    \begin{align}\label{Eq. 1.32}
        |\nabla |\mathrm{A}|| = 0.
    \end{align}
    Hence by \eqref{Eq. 1.32} $|\mathrm{A}|$ is non-negative and  constant. Since $|\mathrm{A}| \neq 0$ then $|\mathrm{A}|$ is positive and constant.
    Moreover, $J|\mathrm{A}| = \Delta|\mathrm{A}| + |\mathrm{A}|^3 +n|\mathrm{A}| = |\mathrm{A}|^3 +n|\mathrm{A}| = |\mathrm{A}|(|\mathrm{A}|^2 + n)$ and $|\mathrm{A}|^2 +n$ is constant. Therefore
    \begin{align*}
        -2n =\lambda_1 = -(|\mathrm{A}|^2 +n).
    \end{align*}
    This implies $|\mathrm{A}|^2 = n$.  Theorem \ref{Theorem 3.1} now says  $\Sigma$ is the upper Clifford torus.\\

    Finally, take the test function $f=1$ in 
    \begin{align*}
        \lambda_1 = \inf \left\{\frac{-\int_{\Sigma}fJf d\mu + \int_{\partial \Sigma}f(\nabla_{\eta}f - \mathrm{q}f)ds}{\int_{\Sigma}f^2 d\mu}\right\}.
    \end{align*}
    Since $\nabla_{\eta}f = 0$ and $\mathrm{q}=0$ then
    \begin{align*}
        \lambda \leq \frac{Q(1)}{\int_{\Sigma} d\mu} = \frac{Q(1)}{\mathrm{Area}(\Sigma)}= -n - \frac{1}{\mathrm{Area}(\Sigma)}\int_{\Sigma}|\mathrm{A}|^2 d\mu.
    \end{align*}
    Using that $|\mathrm{A}|$ is non-negative we obtain
    \begin{align*}
         \lambda \leq  -n - \frac{1}{\mathrm{Area}(\Sigma)}\int_{\Sigma}|\mathrm{A}|^2 d\mu \leq -n.
    \end{align*}
    Futhermore, $\lambda_1 = -n$ iff $|\mathrm{A}| = 0$, proving $(ii)$.
    
\end{proof}

\section{Free-boundary $\mathrm{CMC}$ hypersurfaces on $\mathbb{S}^{n+1}_{+}$}\label{Section 4}

Now we take $x: \Sigma \longrightarrow \mathbb{S}^{n+1}_{+}$ to be a free-boundary hypersurface with constant mean curvature $\mathrm{H}>0$. Similarly to what was done in the minimal case, we will study the Morse index. In Theorem \ref{Theorem 3.1} we proved that either the Morse index is equal to one (if $\Sigma = \mathbb{S}^{n}_{+}(r)$ is totally umbilical with $0<r<1$) or greater than or equal to $n+1$ if $\Sigma$ is not totally umbilical. It equals $n+1$ when $\Sigma$ is the $\mathrm{H}$-torus. \\

As consequence of the first variation for the area, $x: \Sigma \longrightarrow \mathbb{S}^{n+1}_{+}$ has constant mean curvature (not necessarily zero)
 iff $\frac{d}{dt}\mathcal{A}_{f}(t) = 0$ for every smooth function $f \in C^{\infty}(\Sigma)$ such that $\int_{\Sigma}f d\mu = 0$. Let us assume that $\frac{d}{dt}\mathcal{A}_{f}(t) = 0$ for every smooth function $f \in C^{\infty}(\Sigma)$ such that $\int_{\Sigma}f d\mu = 0$ and notice that $\mathrm{H} = \mathrm{H}_0 + (\mathrm{H} - \mathrm{H}_0)$, where $\mathrm{H}_0 = \int_{\Sigma}\frac{1}{\mathrm{Area}(\Sigma)}\mathrm{H}d\Sigma$.\\

 If we consider $f = \mathrm{H} - \mathrm{H}_0$, since $\int_{\Sigma}(\mathrm{H} - \mathrm{H}_0)d\mu = 0$, then
 \begin{align*}
     0= \frac{d}{dt}\mathcal{A}_{f}(t) &= \frac{d}{dt}\mathcal{A}_{\mathrm{H} - \mathrm{H}_0}(t)\\
     &= -n\int_{\Sigma}(\mathrm{H} - \mathrm{H}_0)\mathrm{H}d\mu + \int_{\partial \Sigma}\langle Y, \eta \rangle ds.
 \end{align*}
 This is implies $\mathrm{H} = \mathrm{H}_0$ is constant on $\Sigma$ and $Y \perp \eta$. Conversely, if  $\mathrm{H} = \mathrm{H}_0$ is constant on $\Sigma$ and $Y \perp \eta$ then $\frac{d}{dt}\mathcal{A}_{f}(t) = 0$.  In  other words free-boundary CMC hypersurfaces are critical for the area functional under volume-preserving variations.\\

 There exist two different notions of index:  the strong index  $\mathrm{MI}(\Sigma)$ associated with the Neumann problem, and the weak index  $\mathrm{MI}^{W}(\Sigma)$ associated with the  Neumann problem. The former is simply
 \begin{align*}
     \mathrm{MI}(\Sigma) = \dim \max\{\ V \geq C^{\infty}(\Sigma): Q(f) < 0; f \in V, f \neq 0\},
 \end{align*}
 and $x: \Sigma \longrightarrow \mathbb{S}^{n+1}_{+}$ is called stable if $ \mathrm{MI}(\Sigma) = 0$. On the other hand, the weak index is defined by
 \begin{align*}
      \mathrm{MI}^{W}(\Sigma) = \dim \max \{V \geq C^{\infty}_{W}(\Sigma): Q(f) < 0; f \in V, f \neq 0\},
 \end{align*}
 where $C^{\infty}_{W}(\Sigma) = \{f \in C^{\infty}(\Sigma): \int_{\Sigma}f d\mu = 0 \}$. The hypersurface $x: \Sigma \longrightarrow \mathbb{S}^{n+1}_{+}$ is called weakly stable if $ \mathrm{MI}^{W}(\Sigma) = 0$.\\

 By the min-max principle
 \begin{align*}
     \lambda_1 \leq \lambda_1^{W} \leq \lambda_2 \leq \lambda_2^{W}  \leq ...,
 \end{align*}
 were we denote by $\lambda_i^{W}$ the  weak eigenvalue of the Neumann problem. From a geometrical perspective, the weak index is more natural than the strong index.

\subsection{Morse Index for free-boundary Hypersurfaces with Constant Mean Curvature on  $\mathbb{S}^{n+1}_{+}$}\label{Subsection 4.1}
\hspace{0.5cm} If $x: \Sigma \longrightarrow \mathbb{S}^{n+1}_{+}$ is a totally umbilical hypersurface, it is more convenient to work with the traceless second fundamental form $\mathrm{\circA} = \mathrm{A} - n\mathrm{H}I$, where $I$ is the identity operator on $\mathcal{X}(\Sigma)$. Notice that $\mathrm{tr}|\mathrm{\circA}| = 0$ and $|\mathrm{\circA}|^2 = |\mathrm{A}|^2 - \frac{\mathrm{H}^2}{n} \geq 0$, with equality iff $x: \Sigma \longrightarrow \mathbb{S}^{n+1}_{+}$ is totally umbilical. In terms of $\mathrm{\circA}$, we can write the Jacobi operator as
\begin{align*}
    J = \Delta + |\mathrm{\circA}|^2 + n\left(1 + \frac{\mathrm{H}^2}{n^2} \right).
\end{align*}

For compact hypersurfaces into $\mathbb{S}^{n+1}$ Alías proved \cite[Theorem $9$]{Alías 1} that the weak Morse index of a  non-totally umbilical hypersurface is at least $n+2$, with equality if and only if $\Sigma$ is the $\mathrm{H}$-Clifford torus. He also  showed that the weak Morse index of a totally umbilical hypersurface is exactly $0$.
We shall discuss a  generalization of \cite[Theorem $8$]{Alías 1} and \cite[Theorem $9$]{Alías 1} regarding the weak Morse index of free-boundary hypersurfaces with constant mean curvature $\mathrm{H}>0$ in $\mathbb{S}^{n+1}_{+}$.

\begin{theorem}\label{Theorem 3.5}
     Let $x: \Sigma \longrightarrow \mathbb{S}^{n+1}_{+}$ be a free-boundary hypersurface with constant mean curvature $\mathrm{H}>0$, $\eta = e_{n+2}$ a conormal unit vector of $\partial \Sigma$. If $|\mathrm{A}|$ is constant, $x: \Sigma \longrightarrow \mathbb{S}^{n+1}_{+}$ is weakly stable if and only if $x: \Sigma \longrightarrow \mathbb{S}^{n+1}_{+}$ is a totally umbilical $\mathbb{S}^n_{+} \subset \mathbb{S}^{n+1}_{+}$.
\end{theorem}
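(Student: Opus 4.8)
The plan is to reduce weak stability to a single spectral inequality and then test it against the ambient coordinate functions. Since $|\mathrm{A}|$ is constant, so are $\mathrm{H}$ and $|\mathrm{\circA}|^2 = |\mathrm{A}|^2 - \mathrm{H}^2/n$, and the Jacobi operator reads $J = \Delta + (|\mathrm{A}|^2 + n)$ with constant zeroth order term. Because $\partial\mathbb{S}^{n+1}_{+}$ is a totally geodesic equator we have $\mathrm{q}=0$, so integrating by parts in \eqref{Eq.1.9} cancels the boundary integrals and yields $Q(f) = \int_{\Sigma}|\nabla f|^2\,d\mu - (|\mathrm{A}|^2+n)\int_{\Sigma}f^2\,d\mu$. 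Hence $x$ is weakly stable exactly when $\int_{\Sigma}|\nabla f|^2\,d\mu \geq (|\mathrm{A}|^2+n)\int_{\Sigma}f^2\,d\mu$ for every $f\in C^{\infty}_{W}(\Sigma)$, i.e. when the first nonzero Neumann eigenvalue of $\Sigma$ satisfies $\mu_1 \geq |\mathrm{A}|^2 + n$.

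For the ``if'' direction I would take $\Sigma = \mathbb{S}^{n}_{+}(r)$, whose first nonzero Neumann eigenvalue is $\mu_1 = n/r^2$ (attained by the coordinate functions tangent to the equator, which are mean-zero). A direct check using $|\mathrm{\circA}|=0$ gives $|\mathrm{A}|^2 + n = \mathrm{H}^2/n + n = n/r^2$ for the umbilical cap, so $\mu_1 = |\mathrm{A}|^2+n$ and thus $Q\geq 0$ on mean-zero functions; therefore $\mathbb{S}^{n}_{+}(r)$ is weakly stable.

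For the converse I argue by contraposition: suppose $\Sigma$ is not totally umbilical, so $|\mathrm{\circA}|^2>0$. First I would show the coordinate functions are mean-zero for $i=1,\dots,n+1$. Integrating \eqref{Eq 1.6} and \eqref{Eq 1.7} over $\Sigma$, Stokes together with Lemma \ref{Lemma 1.2} kills the boundary integrands (both $\langle\eta,e_i\rangle$ and $\mathrm{A}(\eta,\eta)\langle\eta,e_i\rangle$ vanish since $\eta=e_{n+2}\perp e_i$), giving $\mathrm{H}\int_{\Sigma}\langle\nu,e_i\rangle = n\int_{\Sigma}\langle x,e_i\rangle$ and $\mathrm{H}\int_{\Sigma}\langle x,e_i\rangle = |\mathrm{A}|^2\int_{\Sigma}\langle\nu,e_i\rangle$; eliminating one integral produces $|\mathrm{\circA}|^2\int_{\Sigma}\langle\nu,e_i\rangle = 0$, so $\int_{\Sigma}\langle\nu,e_i\rangle = \int_{\Sigma}\langle x,e_i\rangle = 0$.

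Next I would take the test fields $u_i = n\langle x,e_i\rangle - \mathrm{H}\langle\nu,e_i\rangle$, which are mean-zero by the above, satisfy $\nabla_{\eta}u_i = 0$ on $\partial\Sigma$ (again $\langle\eta,e_i\rangle=0$), and obey $Ju_i = (n|\mathrm{A}|^2-\mathrm{H}^2)\langle x,e_i\rangle$ by \eqref{Eq 1.10}. Thus $Q(u_i) = -(n|\mathrm{A}|^2-\mathrm{H}^2)\int_{\Sigma}u_i\langle x,e_i\rangle\,d\mu$, and summing over $i\leq n+1$ using $\sum_{i\leq n+1}\langle x,e_i\rangle^2 = 1-\langle x,e_{n+2}\rangle^2$ and $\sum_{i\leq n+1}\langle\nu,e_i\rangle\langle x,e_i\rangle = -\langle\nu,e_{n+2}\rangle\langle x,e_{n+2}\rangle$ gives $\sum_i Q(u_i) = -(n|\mathrm{A}|^2-\mathrm{H}^2)(nR-\mathrm{H}C)$, where $R=\int_{\Sigma}(1-\langle x,e_{n+2}\rangle^2)\,d\mu$ and $C=-\int_{\Sigma}\langle\nu,e_{n+2}\rangle\langle x,e_{n+2}\rangle\,d\mu$. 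The crux, and the step I expect to require care, is the positivity $nR-\mathrm{H}C>0$: applying Green's identity to $\langle x,e_{n+2}\rangle$ (which vanishes on $\partial\Sigma$) together with \eqref{Eq 1.6} converts the cross term and yields $nR-\mathrm{H}C = n\,\mathrm{Area}(\Sigma) - \int_{\Sigma}|\nabla\langle x,e_{n+2}\rangle|^2\,d\mu$; since $|\nabla\langle x,e_{n+2}\rangle|^2 = 1-\langle\nu,e_{n+2}\rangle^2-\langle x,e_{n+2}\rangle^2\leq 1$ and $n\geq 2$, this is at least $(n-1)\mathrm{Area}(\Sigma)>0$. As $n|\mathrm{A}|^2-\mathrm{H}^2 = n|\mathrm{\circA}|^2>0$, we get $\sum_i Q(u_i)<0$, so some mean-zero $u_i$ satisfies $Q(u_i)<0$, contradicting weak stability. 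Hence weak stability forces $|\mathrm{\circA}|=0$, i.e. $\Sigma$ is a totally umbilical $\mathbb{S}^{n}_{+}(r)$.
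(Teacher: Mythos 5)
Your proposal is correct, and its skeleton coincides with the paper's: both directions turn on the functions $u_i = n\langle x,e_i\rangle - \mathrm{H}\langle \nu,e_i\rangle$ for $i=1,\dots,n+1$ (the paper's test functions $g_{e_i}=\frac{\mathrm{H}}{n}\langle\nu,e_i\rangle-\langle x,e_i\rangle$ are exactly $-u_i/n$), which are shown to be mean-zero via Stokes and Lemma \ref{Lemma 1.2}, to satisfy $\nabla_\eta u_i=0$, and to make $\sum_i Q(u_i)$ strictly negative unless $|\mathrm{\circA}|=0$; the umbilical direction is in both cases the Neumann spectrum of the cap $\mathbb{S}^n_+(r)$, which you phrase as the Rayleigh-quotient inequality $\mu_1=n/r^2=|\mathrm{A}|^2+n$ rather than listing eigenvalues. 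The one place you genuinely diverge is the evaluation of $\sum_{i=1}^{n+1}\int_\Sigma u_i\langle x,e_i\rangle\,d\mu$. The paper disposes of it by invoking $\sum_{i=1}^{n+1}\langle x,e_i\rangle^2=1$ and $\sum_{i=1}^{n+1}\langle x,e_i\rangle\langle\nu,e_i\rangle=0$; those identities are only valid for the full sum over $i=1,\dots,n+2$, and the omitted terms $\langle x,e_{n+2}\rangle^2$ and $\langle x,e_{n+2}\rangle\langle\nu,e_{n+2}\rangle$ do not vanish on $\Sigma$ (nor after integration) for, say, the half Clifford torus. You keep these terms and control them with Green's identity applied to $\langle x,e_{n+2}\rangle$, which vanishes on $\partial\Sigma$, together with $|\nabla\langle x,e_{n+2}\rangle|^2=1-\langle x,e_{n+2}\rangle^2-\langle\nu,e_{n+2}\rangle^2\le 1$, arriving at
\begin{align*}
\sum_{i=1}^{n+1}Q(u_i)\;\le\;-\,n|\mathrm{\circA}|^2\,(n-1)\,\mathrm{Area}(\Sigma)\;<\;0 .
\end{align*}
So your argument is not merely equivalent: it supplies the correct accounting of the $e_{n+2}$ contributions that the paper's computation glosses over, at the cost of requiring $n\ge 2$ (which the paper assumes throughout). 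The only cosmetic omission is the standard fact that a totally umbilical free-boundary hypersurface of $\mathbb{S}^{n+1}_+$ is a cap $\mathbb{S}^n_+(r)$, which the paper also takes for granted.
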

\begin{proof}
    Let $x: \Sigma \longrightarrow \mathbb{S}^{n+1}_{+}$ be a weakly stable CMC hypersurface, that is, 
    \begin{align*}
        Q(f) \geq 0,
    \end{align*}
    for every smooth function $f \in C^{\infty}(\Sigma)$ such that $\int_{\Sigma}f d\mu = 0$. Since $\mathrm{H}$ is constant, we have $|\mathrm{A}|^2 = |\mathrm{\circA}|^2 + \frac{\mathrm{H}^2}{n}$. Then Lemma \ref{Lemma 1.1} forces
    \begin{align*}
        \Delta \langle \nu,e_i \rangle &= -|\mathrm{A}|^2   \langle \nu,e_i \rangle + \mathrm{H} \langle x,e_i \rangle\\
        &= -\left(|\mathrm{\circA}|^2 + \frac{\mathrm{H}^2}{n}\right)   \langle \nu,e_i \rangle + \mathrm{H} \langle x,e_i \rangle\\
        &= -|\mathrm{\circA}|^2 \langle \nu, e_i \rangle - \mathrm{H}\left(\frac{\mathrm{H}}{n}\langle \nu, e_i \rangle - \langle x, e_i \rangle\right).
    \end{align*}
    Let us set $g_{e_i} = \frac{\mathrm{H}}{n}\langle \nu, e_i \rangle - \langle x, e_i \rangle$. By Lemma \ref{Lemma 1.1},
    \begin{align*}
        \int_{\Sigma}g_{e_i}d\mu &= \int_{\Sigma}\left(\frac{\mathrm{H}}{n}\langle \nu, e_i \rangle - \langle x, e_i \rangle \right)d\mu\\
        &= \frac{1}{n}\left(\frac{\mathrm{H}}{n}\int_{\Sigma}\langle \nu, e_i \rangle d\mu  + \frac{1}{n}\int_{\Sigma} \Delta \langle x,e_i \rangle d\mu - \frac{\mathrm{H}}{n}\int_{\Sigma}\langle \nu, e_i \rangle\right).
    \end{align*}
    Stokes' theorem and Lemma \ref{Lemma 1.2} imply
    \begin{align*}
         \int_{\Sigma}g_{e_i}d\mu &= \frac{1}{n}\left(\frac{\mathrm{H}}{n}\int_{\Sigma}\langle \nu, e_i \rangle d\mu  + \frac{1}{n}\int_{\Sigma} \Delta \langle x,e_i \rangle d\mu - \frac{\mathrm{H}}{n}\int_{\Sigma}\langle \nu, e_i \rangle d\mu \right)\\
         &=  \frac{1}{n^2}\int_{\partial \Sigma} \nabla_{\eta} \langle x,e_i \rangle ds\\
         &= \frac{1}{n^2}\int_{\partial \Sigma} \langle \eta,e_i \rangle ds = 0,
    \end{align*}
    where the last equality is due to $\eta = e_{n+2}$. Hence 
    \begin{align*}
         \int_{\Sigma}g_{e_i}d\mu = 0.
    \end{align*}
    Now, let us compute $J g_{e_i}$. By Lemma \ref{Lemma 1.1}
    \begin{align*}
        J g_{e_i} &= \Delta g_{e_i} + |\mathrm{\circA}|^2 g_{e_i} + n\left( 1+ \frac{\mathrm{H}}{n^2}\right) g_{e_i}\\
        &= - \frac{\mathrm{H}|\mathrm{\circA}|^2}{n}\langle \nu, e_i \rangle + |\mathrm{\circA}|^2 \left( \frac{\mathrm{H}}{n}\langle \nu, e_i \rangle - \langle x, e_i \rangle \right)\\
        &= - |\mathrm{\circA}|^2 \langle x, e_i \rangle.
    \end{align*}
    We then obtain
    \begin{align*}
        0 \leq \sum_{i=1}^{n+1}Q(g_{e_i}) &= \sum_{i=1}^{n+1}(\int_{\Sigma}g_{e_i} J g_{e_i} d\mu + \int_{\partial \Sigma}g_{e_i}(\nabla_{\eta}g_{e_i} - \mathrm{q}g_{e_i})ds)\\
        &= -\frac{|\mathrm{\circA}|^2 \mathrm{H}}{n}\sum_{i=1}^{n+1}\int_{\Sigma}\langle x,e_i \rangle \langle \nu, e_i \rangle d\mu - |\mathrm{\circA}|^2\sum_{i=1}^{n+1}\int_{\Sigma}\langle x,e_i \rangle ^2 d\mu + \sum_{i=1}^{n+1}\int_{\partial \Sigma}g_{e_i}(\nabla_{\eta}g_{e_i} - \mathrm{q}g_{e_i})ds).
    \end{align*}
    But $x: \Sigma \longrightarrow \mathbb{S}^{n+1}_{+}$ is free-boundary, so   $\mathrm{q} = 0$ and by Lemma \ref{Lemma 1.2} we have $\nabla_{\eta}g_{e_i} = 0$. Hence
    \begin{align*}
         0 \leq \sum_{i=1}^{n+1}Q(g_{e_i}) &= -\frac{|\mathrm{\circA}|^2 \mathrm{H}}{n}\sum_{i=1}^{n+1}\int_{\Sigma}\langle x,e_i \rangle \langle \nu, e_i \rangle d\mu - |\mathrm{\circA}|^2\sum_{i=1}^{n+1}\int_{\Sigma}\langle x,e_i \rangle ^2 d\mu + \sum_{i=1}^{n+1}\int_{\partial \Sigma}g_{e_i}(\nabla_{\eta}g_{e_i} - \mathrm{q}g_{e_i})ds)\\
         &= -\frac{|\mathrm{\circA}|^2 \mathrm{H}}{n}\sum_{i=1}^{n+1}\int_{\Sigma}\langle x,e_i \rangle \langle \nu, e_i \rangle d\mu - |\mathrm{\circA}|^2\sum_{i=1}^{n+1}\int_{\Sigma}\langle x,e_i \rangle ^2 d\mu. 
    \end{align*}
    Moreover, free-boundary also implies  $\sum_{i=1}^{n+1}\langle \nu, e_i \rangle \langle x,e_i \rangle = \langle \nu, x \rangle = 0$ and we know $\sum_{i=1}^{n+1}\langle x, e_i \rangle ^2 = \langle x, x \rangle = 1$. Moreover, 
    \begin{align*}
        0 \leq \sum_{i=1}^{n+1}Q(g_{e_i}) &= -\frac{|\mathrm{\circA}|^2 \mathrm{H}}{n}\sum_{i=1}^{n+1}\int_{\Sigma}\langle x,e_i \rangle \langle \nu, e_i \rangle d\mu - |\mathrm{\circA}|^2\sum_{i=1}^{n+1}\int_{\Sigma}\langle x,e_i \rangle ^2 d\mu\\
        &= -|\mathrm{\circA}|^2 \sum_{i=1}^{n+1}\int_{\Sigma}\langle x, e_i \rangle ^2 d\mu\\
        &= -|\mathrm{\circA}|^2\mathrm{Area}(\Sigma) \leq 0.
    \end{align*}
    We conclude that $|\mathrm{\circA}| = 0$, i.e. $x: \Sigma \longrightarrow \mathbb{S}^{n+1}_{+}$ is totally umbilical.\\

    Conversely, a totally umbilical hypersurface in $\mathbb{S}^{n+1}_{+}$ must be $\mathbb{S}^{n}_{+}(r)$; $0<r<1$. Since $\mathrm{H} = \frac{n \sqrt{1-r^2}}{r}$ then $1+\frac{\mathrm{H}^2}{n^2} = \frac{1}{r^2}$. The Jacobi operator reduces to $J = \Delta + \frac{n}{r^2}$, where $\Delta$ is the Laplacian operator on $\mathbb{S}^{n}_{+}(r)$, with $0<r<1$. Hence the eigenvalues of $J$ are $\lambda_i = \mu_i - \frac{n}{r^2}$, where $\mu_i$ denotes the i-th eigenvalue of the Laplacian on $\Sigma = \mathbb{S}^n_{+}(r)$, with the same multiplicity.
         The eigenvalues of the Laplacian on $\mathbb{S}^n_{+}(r)$ are
        \begin{align*}
            \mu_i = \frac{(i-1)(n+i-2)}{r^2}; i=1,2,...
        \end{align*}
        with multiplicities
        \begin{align*}
            m_{\mu_1} = 1, m_{\mu_2} = n
        \end{align*}
        and
        $m_{\mu_k}=$ $\begin{pmatrix}
          n+k-2 &\\
          k-2 
\end{pmatrix}$ - $\begin{pmatrix}
          n+k-4 &\\
          k-4 
\end{pmatrix}$, with $k=4,5,...$\\

In particular, $\lambda_1 = -\frac{n}{r^2}<0$ has multiplicity $1$ and is associated with  constant eigenfunctions. Since all other eigenfunctions of $J$ (for the Neumann problem) are orthogonal to the constants, they satisfy $\int_{\Sigma}f d\mu = 0$ and thus fulfil Neumann's boundary condition.\\
Hence, 
\begin{align*}
    \lambda_i^{W} = \lambda_i = \mu_{i+1} - \frac{n}{r^2},
\end{align*}
for $i \geq 1$. Since $\mu_2 = \frac{n}{r^2}$ then $\lambda_1^{W} = \lambda_2 = \mu_2 - \frac{n}{r^2} = 0$. It follow that $x: \Sigma \longrightarrow \mathbb{S}^{n+1}_{+}$ is weakly stable, and the proof ends.
\end{proof}

\begin{theorem}\label{Theorem 3.6}
    Let $x: \Sigma \longrightarrow \mathbb{S}^{n+1}_{+}$ be a free-boundary hypersurface with constant mean curvature $\mathrm{H}>0$ and $\eta = e_{n+2}$ a conormal unit vector of $\partial \Sigma$. If $|\mathrm{A}|$ is constant  then 
    \begin{itemize}
        \item i. either $\mathrm{MI}_{W}(\Sigma) = 0$ and $ \mathbb{S}^{n}_{+}(r) \subset \mathbb{S}^{n+1}_{+}$,
        \item ii. or $\mathrm{MI}_{W}(\Sigma) \geq n+1$, with equality if and only if $\Sigma$ is the upper $\mathrm{H}$- Clifford torus $\mathbb{S}^{k}(r)_{+}\times \mathbb{S}^{n-k}(\sqrt{1-r^2})$ with radius $\sqrt{\frac{k}{n+2}} \leq r \leq \sqrt{\frac{k+2}{n+2}}$. 
    \end{itemize}
    
\end{theorem}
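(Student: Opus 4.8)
The plan is to separate the totally umbilical case from the non-umbilical one, deriving (i) directly from Theorem \ref{Theorem 3.5} and proving the lower bound in (ii) by exhibiting an explicit $(n+1)$-dimensional subspace of $C^\infty_W(\Sigma)$ on which $Q$ is negative definite. Since $\mathrm{H}$ and $|\mathrm{A}|$ are both constant, so is $|\mathrm{\circA}|^2 = |\mathrm{A}|^2 - \mathrm{H}^2/n \geq 0$. If $\Sigma$ is totally umbilical then $|\mathrm{\circA}| = 0$, $\Sigma = \mathbb{S}^n_+(r)$, and Theorem \ref{Theorem 3.5} shows it is weakly stable, i.e. $\mathrm{MI}_W(\Sigma) = 0$, which is case (i). From now on I assume $\Sigma$ is not totally umbilical, so that $|\mathrm{\circA}|^2$ is a positive constant.

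For the lower bound I would recycle the functions $g_{e_i} = \tfrac{\mathrm{H}}{n}\langle\nu,e_i\rangle - \langle x,e_i\rangle$, $i = 1,\dots,n+1$, already analysed in the proof of Theorem \ref{Theorem 3.5}, where it is shown that $\int_\Sigma g_{e_i}\,d\mu = 0$ (so $g_{e_i}\in C^\infty_W(\Sigma)$), that $\nabla_\eta g_{e_i} = 0$, and that $Jg_{e_i} = -|\mathrm{\circA}|^2\langle x,e_i\rangle$. By linearity, for $a = \sum_{i=1}^{n+1}c_i e_i \perp e_{n+2}$ the function $g_a = \tfrac{\mathrm{H}}{n}\langle\nu,a\rangle - \langle x,a\rangle$ inherits all three properties. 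The heart of the argument is the evaluation of $Q(g_a)$: the boundary term drops because $\mathrm{q}=0$ and $\nabla_\eta g_a = 0$, so $Q(g_a) = |\mathrm{\circA}|^2\int_\Sigma g_a\langle x,a\rangle\,d\mu$. Eliminating $\langle\nu,a\rangle$ through \eqref{Eq 1.6}, which reads $\mathrm{H}\langle\nu,a\rangle = \Delta\langle x,a\rangle + n\langle x,a\rangle$, collapses the integrand to $\tfrac1n\langle x,a\rangle\Delta\langle x,a\rangle$; integrating by parts and using $\nabla_\eta\langle x,a\rangle = \langle\eta,a\rangle = 0$ (Lemma \ref{Lemma 1.2}, since $a\perp e_{n+2}$) yields
\[
Q(g_a) = -\frac{|\mathrm{\circA}|^2}{n}\int_\Sigma |\nabla\langle x,a\rangle|^2\,d\mu \leq 0.
\]

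Next I would upgrade this to strict negativity and pin down the dimension. Equality $Q(g_a)=0$ forces $\langle x,a\rangle$ constant, hence $\langle\nu,a\rangle$ constant by \eqref{Eq 1.6}, hence $g_a\equiv 0$; conversely $g_a\equiv 0$ gives $Jg_a = -|\mathrm{\circA}|^2\langle x,a\rangle \equiv 0$, so $\langle x,a\rangle\equiv 0$, placing $\Sigma$ in the great sphere $a^\perp$ and making it totally geodesic, against non-umbilicity. Thus $a\mapsto g_a$ is injective, $W = \mathrm{span}\{g_{e_1},\dots,g_{e_{n+1}}\}\subset C^\infty_W(\Sigma)$ has dimension $n+1$, and $Q|_W$ is negative definite; therefore $\mathrm{MI}_W(\Sigma)\geq n+1$.

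It remains to settle the equality case. For the ``if'' part I would compute the weak spectrum of $J = \Delta + |\mathrm{\circA}|^2 + n(1+\mathrm{H}^2/n^2)$ on the product $\mathbb{S}^k_+(r)\times\mathbb{S}^{n-k}(\sqrt{1-r^2})$ exactly as in Theorem \ref{Theorem 3.2}: since $J$ is $\Delta$ plus a constant, $\mathrm{MI}_W$ equals the number of Laplace eigenvalues (with multiplicity, discarding the constant mode) lying below that constant, and a direct count, together with the values of $\mathrm{H}$ and $|\mathrm{\circA}|^2$ in terms of $r,k,n$, shows this number is exactly $n+1$ precisely on the interval $\sqrt{k/(n+2)}\leq r\leq\sqrt{(k+2)/(n+2)}$, whose endpoints are where the next eigenvalue crosses zero. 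The ``only if'' direction is the main obstacle. Here the constancy of $|\mathrm{A}|$ makes $\Sigma$ isoparametric with constant principal curvatures, and the task is to show that any such free-boundary CMC hypersurface other than the upper $\mathrm{H}$-Clifford torus of the prescribed radius admits a negative direction transverse to $W$, forcing $\mathrm{MI}_W(\Sigma) > n+1$; Cartan's classification (as invoked in Lemma \ref{Lemma 3.1}) then isolates the two-distinct-curvature case, and the radius restriction is read off from the ``if'' computation. Producing these extra negative directions and eliminating the remaining isoparametric families, while keeping the boundary condition $\eta = e_{n+2}$ respected throughout, is the delicate point I expect to require the most care.
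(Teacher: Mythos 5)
Your argument is correct where it is complete, and for the crucial lower bound it takes a genuinely different route from the paper. The paper diagonalizes the $2\times 2$ system $J\langle x,e_i\rangle=|\mathrm{A}|^2\langle x,e_i\rangle+\mathrm{H}\langle\nu,e_i\rangle$, $J\langle\nu,e_i\rangle=n\langle\nu,e_i\rangle+\mathrm{H}\langle x,e_i\rangle$, producing eigenspaces $E_{\pm}=\mathrm{span}\{\langle x,e_i\rangle+\gamma_{\pm}\langle\nu,e_i\rangle\}$ with positive eigenvalues $\lambda_{\pm}$ of the matrix $\bigl(\begin{smallmatrix}|\mathrm{A}|^2&\mathrm{H}\\ \mathrm{H}&n\end{smallmatrix}\bigr)$, and gets $\dim E_{+}+\dim E_{-}\geq n+1$ from rank--nullity plus the claim $\mathrm{Ker}\,\psi_{+}\cap\mathrm{Ker}\,\psi_{-}=\{0\}$. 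You instead reuse the zero-mean functions $g_a=\tfrac{\mathrm{H}}{n}\langle\nu,a\rangle-\langle x,a\rangle$ from Theorem \ref{Theorem 3.5}, observe $g_a=\tfrac1n\Delta\langle x,a\rangle$ and $Jg_a=-|\mathrm{\circA}|^2\langle x,a\rangle$, and integrate by parts to get $Q(g_a)=-\tfrac{|\mathrm{\circA}|^2}{n}\int_{\Sigma}|\nabla\langle x,a\rangle|^2\,d\mu<0$ for $a\neq0$, $a\perp e_{n+2}$; your injectivity argument for $a\mapsto g_a$ is sound. This buys you something the paper's version does not deliver cleanly: your test functions manifestly satisfy $\int_{\Sigma}g_a\,d\mu=0$, so they genuinely estimate the \emph{weak} index, whereas the functions spanning $E_{\pm}$ are not obviously mean-free and the paper's displayed conclusion at that step is only $\mathrm{MI}(\Sigma)\geq n+1$ for the strong index. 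Your argument is also self-contained in that negativity comes from an explicit Dirichlet energy rather than from checking the sign of $\lambda_{-}$, i.e.\ of $n|\mathrm{A}|^2-\mathrm{H}^2$. On the equality case you and the paper are in the same position: the ``if'' direction is the spectral count on $\mathbb{S}^{k}_{+}(r)\times\mathbb{S}^{n-k}(\sqrt{1-r^2})$ (identical in both treatments, and it is indeed here that the radius window $\sqrt{k/(n+2)}\leq r\leq\sqrt{(k+2)/(n+2)}$ must enter, to keep the next eigenvalue nonnegative), while the ``only if'' direction is only sketched by you via the isoparametric classification --- but the paper's proof likewise establishes only the ``if'' half, so you have not fallen short of the source on this point; just be aware that this implication still needs to be written out to claim the full characterization.
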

\begin{proof}
Let $x: \Sigma \longrightarrow \mathbb{S}^{n+1}_{+}$ be totally umbilical. By Theorem \ref{Theorem 3.5} we have $x$ is weakly stable, that is, $\mathrm{MI}^{W}(\Sigma) = 0$, proving $(i)$.\\

Suppose $\Sigma$ is not totally umbilical. By Lemma \ref{Lemma 1.1}
    
    \begin{align*} 
    J \begin{pmatrix}  \langle x,e_i \rangle \\  \langle\nu, e_i \rangle \end{pmatrix} =   \begin{pmatrix}  |\mathrm{A}|^2 & \mathrm{H} \\  \mathrm{H} & n \end{pmatrix}\begin{pmatrix}  \langle x,e_i \rangle \\  \langle\nu, e_i \rangle \end{pmatrix}.
    \end{align*}

The characteristic polynomial 
\begin{align*}
    p(\lambda) = \begin{vmatrix}
    |\mathrm{A}|^2 - \lambda & \mathrm{H} \\ \mathrm{H} & n -\lambda
    \end{vmatrix} =  \lambda^2 - ( |\mathrm{A}|^2 + n)\lambda + n|\mathrm{A}|^2- \mathrm{H}^2
\end{align*}
has roots 
\begin{align*}
    \lambda_{-} = \dfrac{|\mathrm{A}|^2 + n -  { \sqrt{\delta}}}{2}\\
     \lambda_{+} = \dfrac{|\mathrm{A}|^2 + n +  \sqrt{\delta}}{2},
\end{align*}
where  $\delta = \sqrt{(|\mathrm{A}|^2 - n)^2 + 4\mathrm{H}^2}$.
Observe that $\lambda_{\pm}$ are distinct and real since $|\mathrm{A}|^2 \neq n$ and $\delta > 0$.
  Then $Jf - \lambda_{\pm}f = 0$, that is, $\lambda_{\pm}$ are eigenvalues of J.\\

Set $\gamma_{\pm} = \dfrac{n -\lambda_{\pm}+\mathrm{H}}{|\mathrm{A}|^2 -\lambda_{\pm}+\mathrm{H}}$. Consider the subspace $E_{+} = \mathrm{span}\{\langle x,e_i \rangle + \gamma_{+}\langle \nu, e_i \rangle;  i=1,...,n+1\}$.
For each $E_{\pm}$, we have
\begin{align*}
    J(\langle x,e_i \rangle + \gamma_{\pm}\langle \nu, e_i \rangle) = \lambda_{\pm}(\langle x,e_i \rangle + \gamma_{\pm}\langle \nu, e_i \rangle).
\end{align*}
Let $\psi_{\pm}: \mathbb{R}^{n+1} \longrightarrow \mathrm{E}_{\pm}$ be the linear surjective map $\psi_{\pm}(e_i) = \langle x,e_i \rangle + \gamma_{\pm}\langle \nu, e_i \rangle$. By construction  $\mathrm{Im}\psi_{\pm} =  E_{\pm}$. The rank-nullity theorem implies 
\begin{align*}
    &n+1 = \dim \mathbb{R}^{n+1} = \dim \mathrm{Ker}\psi_{+} + \dim E_{+}\\
    &n+1 = \dim \mathbb{R}^{n+1} = \dim \mathrm{Ker}\psi_{-} + \dim E_{-}
\end{align*}
and so 
\begin{align}\label{Eq. 3.2}
    2(n+2) = \dim \mathrm{Ker}\psi_{+} + \dim \mathrm{Ker}\psi_{-} + \dim E_{+} + \dim E_{-}.
\end{align}

 \begin{Claim}\label{Claim 2}
        $\mathrm{Ker}\psi_{+} \cap \mathrm{Ker}\psi_{-} = \{0\}$.
 \end{Claim}
 \begin{proof}
     Assume there exists a vector $v \in \mathrm{Ker}\psi_{+} \cap \mathrm{Ker}\psi_{-}$. Then
     \begin{align*}
         \langle x,v \rangle + \gamma_{+}\langle \nu, v \rangle = \langle x,v \rangle + \gamma_{-}\langle \nu, v \rangle.
     \end{align*}
    This implies that
     \begin{align*}
         (\gamma_{+}-\gamma_{-})(\langle \nu, v \rangle).
     \end{align*}
     Using the fact that $\gamma_{+}-\gamma_{-} > 0$ we deduce $\langle \nu, v \rangle = 0$. That is, $\Sigma$ is a totally geodesic equator of $\mathbb{S}^{n+1}_{+}$ [see \cite[Theorem $1$]{Alías 2}], which contradicts the hypothesis. Moreover $\mathrm{Ker}\psi_{+} \cap \mathrm{Ker}\psi_{-} = \{0\}$, as desired.
 \end{proof}
 By Claim \ref{Claim 2} together with $\dim(\mathrm{Ker}\psi_{+} \oplus \mathrm{Ker}\psi_{-}) \leq n+1$ we obtain 
 \begin{align*}
     n+1 \geq \dim (ker \psi_{+}\oplus ker \psi_{-}) = \dim ker \psi_{+} + \dim ker \psi_{-} - \dim (ker \psi_{+} \cap ker \psi_{-}) =  \dim ker \psi_{+} + \dim ker \psi_{-}.
 \end{align*}
 That is,
 \begin{align}\label{Eq. 3.3}
     \dim ker \psi_{+} + \dim ker \psi_{-} \leq n+1.
 \end{align}
Equations (\ref{Eq. 3.2})-(\ref{Eq. 3.3}) force
 \begin{align}\label{Eq. 3.4}
     \dim E_{+} + \dim E_{-} \geq n+1.
 \end{align}

 Let us prove $Q|_{E_{-} \oplus E_{+}}$ is negative definite. For $f_1 \in E_{-}$ and $f_2 \in E_{+}$ such that $f=f_1 + f_2$, 
\begin{align*}
    Q(f) &= -\int_{\Sigma}fJf d\mu + \int_{\partial\Sigma}f(\nabla_{\eta}f - \mathrm{q}f)ds\\
    &=  -\int_{\Sigma}f_1Jf_1 d\mu -  \int_{\Sigma}f_2Jf_2 d\mu -  \int_{\Sigma}f_2Jf_1 d\mu &-\\ \\ & \  \ -  \int_{\Sigma}f_1Jf_2 d\mu+\int_{\partial\Sigma}(f_1 + f_2)(\nabla_{\eta}f_1 + \nabla_{\eta}f_2  - \mathrm{q}(f_1 + f_2))ds.
\end{align*}
Since $Jf-\lambda_{\pm}f=0$ with $\lambda_{\pm}$ distinct, $E_{+}$ and $E_{-}$ are orthogonal. Then 
\begin{align*}
    Q(f) &= -\int_{\Sigma}fJf d\mu + \int_{\partial\Sigma}f(\nabla_{\eta}f - \mathrm{q}f)ds\\
    &=  -\int_{\Sigma}\lambda_{-}f_1^2 d\mu -  \int_{\Sigma}\lambda_{+}f_2^2 d\mu +   &+\\ &+ \int_{\partial\Sigma}(f_1 + f_2)(\nabla_{\eta}f_1 + \nabla_{\eta}f_2)ds  - \int_{\partial\Sigma}\mathrm{q}(f_1^2 + f_2^2)ds.
\end{align*}
Since $e_i \perp \eta$, for $i=1,...,n+1$, where $\eta = e_{n+2}$,  by Lemma (\ref{Lemma 1.2}) $\nabla_{\eta}f_1 = \nabla_{\eta}f_2 = 0$. But $\Sigma$ is free-boundary, so $\mathrm{q} = 0$. Hence
\begin{align}\label{Eq. 2.5}
    Q(f) &=  -\int_{\Sigma}f_1Jf_1 d\mu -  \int_{\Sigma}f_2Jf_2 d\mu \\
    &= -\int_{\Sigma}\lambda_{-}f_1^2 d\mu -  \int_{\Sigma}\lambda_{+}f_2^2 d\mu < 0.
\end{align}

Furthermore $Q|_{E_{-} \oplus E_{+}}$ is negative definite. By (\ref{Eq. 3.4})
\begin{align*}
    \mathrm{MI}(\Sigma) \geq \dim(E_{-} \oplus E_{+}) = \dim E_{-} + \dim E_{+} \geq n+1,
\end{align*}
proving the first part of $(ii)$. Now, let us to prove the last part.\\

Let $x: \Sigma \longrightarrow \mathbb{S}^{n+1}_{+}$ be now the $\mathrm{H}$-Clifford torus. The Jacobi operator reduces to 
\begin{align*}
    J = \Delta + \frac{k}{r^2} + \frac{n-k}{1-r^2},
\end{align*}
 where $\Delta$ is the Laplacian on $\Sigma = \mathbb{S}^{k}_{+}(r) \times \mathbb{S}^{n-k}(\sqrt{1-r^2})$. In particular, $\lambda_1 = -\left(\frac{k}{r^2} + \frac{n-k}{1-r^2} \right)<0$, with multiplicity $1$,  has  constant eigenfunctions. Since all eigenfunctions of $J$ (for the Neumann problem) are orthogonal to the constants, they satisfy $\int_{\Sigma}f d\mu = 0$ and the Neumann boundary condition holds, $\nabla_{\eta}f = 0$. Also, $\Sigma$  is free-boundary so $\mathrm{q} = 0$. Similarly to  Theorem \ref{Theorem 3.5}, we have
 \begin{align*}
     \lambda_i^{W} = \lambda_{i+1} = \mu_{i+1} - \left(\frac{k}{r^2} + \frac{n-k}{1-r^2} \right),
 \end{align*}
 for $i \geq 1$. Counting eigenvalues as we did in Theorem \ref{Theorem 3.2} we obtain 
 \begin{align*}
     \lambda_1^{W} = \lambda_2 = -\frac{k}{r^2}, 
 \end{align*}
 with multiplicity $k$.
 \begin{align*}
     \lambda_1^{W} = \lambda_2 = -\frac{n-k}{1-r^2} < 0, 
 \end{align*}
 with multiplicity $n-k+1$.
 Moreover $\lambda_2^{W} = 0$, and for $i \geq 2$ we have $\lambda_i^{W} \geq 0$. Hence $\mathrm{MI}^{W}(\Sigma) = n+1$. 
\end{proof}

\subsection{A sharp estimate for the first eigenvalue of the Jacobi operator of free-boundary CMC hypersurfaces in $\mathbb{S}^{n+1}_{+}$}\label{Subsection 4.2}

\hspace{0.5cm} We proved in Lemma \ref{Lemma 3.1} that $\Sigma$ is either totally geodesic or half of a Clifford torus. The next theorem will provide, under certain conditions, a classification of hypersurfaces with constant mean curvature $\mathrm{H}>0$ in  $\mathbb{S}^{n+1}_{+}$. The techniques are borrowed from \cite{Alencar}. In terms of $\mathrm{\circA}$, from $|\mathrm{\circA}|^2 = |\mathrm{A}|^2 - \frac{\mathrm{H}^2}{n}$ the Jacobi operator is $J = \Delta + |\mathrm{\circA}|^2 + n\left( 1 + \frac{\mathrm{H}^2}{n^2}\right)$. In Theorem \ref{Theorem 2.2} we used a generalization  of Simons' equation and other tools from  \cite[Theorem $1.5$]{Alencar}.

\begin{theorem}\label{Theorem 2.2}
    Let $x: \Sigma \longrightarrow \mathbb{S}^{n+1}_{+}$ be a free-boundary hypersurface with constant mean curvature $\mathrm{H}>0$ and assume  $\nabla_{\eta}|\mathrm{\circA}|^2 = 0$.  Then either $\Sigma$ is totally umbilical or $|\mathrm{\circA}|= \alpha_{\mathrm{H}}$, where 
    \begin{align*}
        \alpha_{\mathrm{H}} = \frac{-(n-2)\mathrm{H}+\sqrt{(n-2)\mathrm{H}^2 + 4n(n-1)(1+\mathrm{H}^2)}}{2\sqrt{n-1}}
    \end{align*}
    is the positive root of the polynomial
    \begin{align*}
        P_{\mathrm{H}}(x)= x^2 + \frac{(n-2)}{\sqrt{n-1}}\mathrm{H}x - n\left(1+\frac{\mathrm{H}^2}{n^2}\right).
    \end{align*}
\end{theorem}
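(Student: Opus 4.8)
The plan is to transplant the Alencar--do Carmo rigidity argument \cite{Alencar} to the free-boundary setting, using Stokes' theorem in place of the closedness of $\Sigma$ and absorbing the boundary contribution through the hypothesis $\nabla_{\eta}|\mathrm{\circA}|^2 = 0$. The starting point is the generalized Simons identity for the traceless operator $\mathrm{\circA}$ of a constant mean curvature hypersurface, the CMC analogue of \eqref{Eq. 1.26}, which has the shape
\begin{align*}
\frac{1}{2}\Delta|\mathrm{\circA}|^2 = |\nabla\mathrm{\circA}|^2 + n\left(1+\frac{\mathrm{H}^2}{n^2}\right)|\mathrm{\circA}|^2 - |\mathrm{\circA}|^4 - c_n\mathrm{H}\,\mathrm{tr}(\mathrm{\circA}^3),
\end{align*}
where $c_n$ is the dimensional constant produced by the Gauss and Codazzi equations. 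The cubic term $\mathrm{tr}(\mathrm{\circA}^3)$ is the only genuinely new ingredient relative to the minimal case of Lemma \ref{Lemma 3.1}.

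Next I would invoke Okumura's inequality $|\mathrm{tr}(\mathrm{\circA}^3)| \leq \frac{n-2}{\sqrt{n(n-1)}}|\mathrm{\circA}|^3$, valid for any traceless symmetric operator, to dominate the cubic term by $|\mathrm{\circA}|^3$. Once $c_n$ is matched to the unnormalized convention $\mathrm{H}=\mathrm{tr}(\mathrm{S}_{\nu})$, this turns the identity into the differential inequality
\begin{align*}
\frac{1}{2}\Delta|\mathrm{\circA}|^2 \geq |\nabla\mathrm{\circA}|^2 - |\mathrm{\circA}|^2\,P_{\mathrm{H}}(|\mathrm{\circA}|),
\end{align*}
with $P_{\mathrm{H}}$ exactly the polynomial in the statement. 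The key algebraic fact is that $P_{\mathrm{H}}$ has a single positive root $\alpha_{\mathrm{H}}$ and is strictly negative on $[0,\alpha_{\mathrm{H}})$, so that under the pinching $|\mathrm{\circA}|\leq\alpha_{\mathrm{H}}$ the reaction term $-|\mathrm{\circA}|^2 P_{\mathrm{H}}(|\mathrm{\circA}|)$ is non-negative.

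I would then integrate the inequality over $\Sigma$ and apply Stokes' theorem, so that the left-hand side becomes $\tfrac12\int_{\partial\Sigma}\nabla_{\eta}|\mathrm{\circA}|^2\,ds$, which vanishes by hypothesis. Consequently
\begin{align*}
0 \geq \int_{\Sigma}|\nabla\mathrm{\circA}|^2\,d\mu - \int_{\Sigma}|\mathrm{\circA}|^2\,P_{\mathrm{H}}(|\mathrm{\circA}|)\,d\mu \geq 0,
\end{align*}
forcing both integrands to vanish pointwise. From $|\nabla\mathrm{\circA}| = 0$ the function $|\mathrm{\circA}|$ is constant on the connected surface $\Sigma$, while $|\mathrm{\circA}|^2 P_{\mathrm{H}}(|\mathrm{\circA}|) = 0$ forces that constant to be either $0$ (totally umbilical) or the root $\alpha_{\mathrm{H}}$; in the latter case the two-principal-curvature (isoparametric) argument of Lemma \ref{Lemma 3.1} identifies $\Sigma$ with the $\mathrm{H}$-torus. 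This yields the stated dichotomy.

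The main obstacle is pinning down the sign of the reaction term. The integral argument on its own only delivers $\int_{\Sigma}|\mathrm{\circA}|^2 P_{\mathrm{H}}(|\mathrm{\circA}|)\,d\mu \geq 0$, and rigidity requires the pointwise bound $|\mathrm{\circA}|\leq\alpha_{\mathrm{H}}$ (equivalently $P_{\mathrm{H}}(|\mathrm{\circA}|)\leq 0$) in order that the two non-negative integrals annihilate each other. Thus the genuine content lies in securing this pinching under the stated hypotheses and in applying Okumura's inequality with the sign of $\mathrm{H}\,\mathrm{tr}(\mathrm{\circA}^3)$ consistent with the chosen orientation, so that the worst case assembles precisely into $P_{\mathrm{H}}$ rather than its reflection. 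A secondary technical point is verifying the exact value of $c_n$ in the generalized Simons identity for the convention $\mathrm{H}=\mathrm{tr}(\mathrm{S}_{\nu})$, which is what fixes the coefficient $\tfrac{n-2}{\sqrt{n-1}}$ of the linear term of $P_{\mathrm{H}}$.
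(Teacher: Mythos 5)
Your proposal follows the paper's proof essentially step for step: the generalized Simons identity for $\mathrm{\circA}$, Okumura's inequality to absorb $\mathrm{H}\,\mathrm{tr}(\mathrm{\circA}^3)$ into the polynomial $P_{\mathrm{H}}$, integration plus Stokes' theorem with the boundary term killed by $\nabla_{\eta}|\mathrm{\circA}|^2=0$, and the sandwich forcing $\nabla\mathrm{\circA}=0$ and $|\mathrm{\circA}|^2P_{\mathrm{H}}(|\mathrm{\circA}|)=0$. The obstacle you single out --- that the final sandwich $0\le\int_{\Sigma}|\nabla\mathrm{\circA}|^2\,d\mu\le\int_{\Sigma}|\mathrm{\circA}|^2P_{\mathrm{H}}(|\mathrm{\circA}|)\,d\mu\le 0$ needs the pointwise pinching $|\mathrm{\circA}|\le\alpha_{\mathrm{H}}$, which is not among the stated hypotheses --- is genuine, and the paper does not resolve it either: its proof simply asserts that ``over $[0,\alpha_{\mathrm{H}}]$ the expression $P_{\mathrm{H}}(|\mathrm{\circA}|)$ is non-positive'' without any hypothesis placing $|\mathrm{\circA}|$ in that interval (contrast the explicit pinching assumption in Alencar--do Carmo). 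So you have reproduced the intended argument and correctly located its weak point; to make the theorem airtight as stated one must either add the hypothesis $|\mathrm{\circA}|\le\alpha_{\mathrm{H}}$ or supply an independent argument for it, and one should also reconcile the coefficient $\tfrac{n-2}{\sqrt{n-1}}$ in the statement of $P_{\mathrm{H}}$ with the $\tfrac{n-2}{\sqrt{n(n-1)}}$ that Okumura's inequality actually produces in the proof.
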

\begin{proof}
    The extension of Simon's equation for CMC hypersurfaces reads
    \begin{align}\label{Eq. 2.6}
        \frac{1}{2}\Delta|\mathrm{\circA}|^2 = |\nabla \mathrm{\circA}|^2 + \left(n\left(1+\frac{\mathrm{H}^2}{n^2}\right)-|\mathrm{\circA}|^2\right)|\mathrm{\circA}|^2 + \mathrm{H}\mathrm{tr}(\mathrm{\circA^3}).  
        \end{align}
        We will use the following  result \cite[Lemma $2.6$]{Alencar}
        \begin{lemma}\label{Lema 2.3}
            Let $a_1,...,a_n$ be real numbers such that $\sum_{i=1}^{n}a_i = 0$. Then
            \begin{align*}
                -\frac{n-2}{\sqrt{n(n-1)}}(\sum_{i=1}^{n}a^2_i)^{\frac{3}{2}}\leq \sum_{i=1}^{n}a^3_i \leq \frac{n-2}{\sqrt{n(n-1)}}(\sum_{i=1}^{n}a^2_i)^{\frac{3}{2}}.
            \end{align*}
            Moreover, equality holds on the right (or the left) if and only if $(n-1)$ of the $a_i$  are non-positive (respectively, non-negative) and equal.
        \end{lemma}
        Since $\mathrm{tr}(\mathrm{\circA})=0$, by Lemma \ref{Lema 2.3}
         \begin{align*}
            |\mathrm{tr}(\mathrm{\circA^3})| \leq \frac{n-2}{\sqrt{n(n-1)}}|\mathrm{\circA}|^3 
        \end{align*}
        and so 
        \begin{align}\label{Eq. 2.7}
         -\frac{(n-2)}{\sqrt{n(n-1)}}\mathrm{H}|\mathrm{\circA}|^3   \leq \mathrm{H}\mathrm{tr}(\mathrm{\circA^3}) \leq  \frac{(n-2)}{\sqrt{n(n-1)}}\mathrm{H}|\mathrm{\circA}|^3.
        \end{align}
        By equations (\ref{Eq. 2.6}) and (\ref{Eq. 2.7})
        \begin{align*}
            \frac{1}{2}\Delta|\mathrm{\circA}|^2 \geq |\nabla \mathrm{\circA}|^2 - |\mathrm{\circA}|^2\left(\frac{(n-2)\mathrm{H}|\mathrm{\circA}|}{\sqrt{n(n-1)}}+ |\mathrm{\circA}|^2 - n\left(1+\frac{\mathrm{H}^2}{n^2}\right)\right).
        \end{align*}
        Put $P_{\mathrm{H}}(|\mathrm{\circA}|) = \frac{(n-2)\mathrm{H}|\mathrm{\circA}|}{\sqrt{n(n-1)}}+ |\mathrm{\circA}|^2 - n\left(1+\frac{\mathrm{H}^2}{n^2}\right)$, so that 
        \begin{align}\label{Eq. 2.8}
             \frac{1}{2}\Delta|\mathrm{\circA}|^2 \geq |\nabla \mathrm{\circA}|^2 - |\mathrm{\circA}|^2 P_{\mathrm{H}}(|\mathrm{\circA}|).
        \end{align}
     Integrating equation (\ref{Eq. 2.8}), 
     \begin{align*}
         \frac{1}{2}\int_{\Sigma}\Delta|\mathrm{\circA}|^2 d\mu \geq \int_{\Sigma}|\nabla \mathrm{\circA}|^2 d\mu - \int_{\Sigma}|\mathrm{\circA}|^2 P_{\mathrm{H}}(|\mathrm{\circA}|)d\mu.
     \end{align*}
     Since $\nabla_{\eta}|\mathrm{\circA}|^2 = 0$, we use Stokes' to obtain
     \begin{align}\label{Eq. 2.9}
         0 \geq \int_{\Sigma}|\nabla \mathrm{\circA}|^2d\mu - \int_{\Sigma}|\mathrm{\circA}|^2 P_{\mathrm{H}}(|\mathrm{\circA}|)d\mu.
     \end{align}
    Using that $\int_{\Sigma}|\mathrm{\circA}|^2 \geq 0$ and (\ref{Eq. 2.9}), we find 
     \begin{align*}
         0 \leq \int_{\Sigma}|\nabla \mathrm{\circA}|^2 d\mu \leq \int_{\Sigma}|\mathrm{\circA}|^2 P_{\mathrm{H}}(|\mathrm{\circA}|)d\mu.
     \end{align*}
     Over $[0, \alpha_{\mathrm{H}}]$ the expression  $P_{\mathrm{H}}(|\mathrm{\circA}|)$ is non-positive, so
      \begin{align}\label{Eq. 2.10}
         0 \leq \int_{\Sigma}|\nabla \mathrm{\circA}|^2 d\mu \leq \int_{\Sigma}|\mathrm{\circA}|^2 P_{\mathrm{H}}(|\mathrm{\circA}|)d\mu  \leq 0.
     \end{align}
     Therefore by (\ref{Eq. 2.10}) 
     \begin{align*}
         \int_{\Sigma}|\nabla \mathrm{\circA}|^2 d\mu = 0.
     \end{align*}
     Since $|\nabla \mathrm{\circA}|^2$ is non-negative, we conclude $\nabla \mathrm{\circA} = 0$. On the other hand,  $\mathrm{H}$ is constant, and then
     \begin{align*}
         0 = \nabla \mathrm{A} = \nabla \mathrm{\circA}.
     \end{align*}
Therefore $\Sigma$ has exactly two principal curvatures \cite[Lemma $1$]{Lawson} with multiplicity $n-1$ and $1$. Since $\Sigma$ is isoparametric, by \cite[Theorem $4.20$]{Dacjer} it is contained in a hypersurface of revolution  of $\mathbb{S}^{n+1}$. By \cite[Theorem $1.2$]{Alencar}, $\Sigma$ is the free-boundary $\mathrm{H}$-Clifford torus.
         Moreover, by (\ref{Eq. 2.10}) $\int_{\Sigma}|\mathrm{\circA}|^2 P_{\mathrm{H}}(|\mathrm{\circA}|)d\mu = 0$, and so 
         \begin{align*}
             \textrm{either}\  \ |\mathrm{\circA}| = 0  \  \ \textrm{or} \  \  P_{\mathrm{H}}(|\mathrm{\circA}|) = 0.
         \end{align*}
         That is, either $\Sigma$ is totally umbilical or $|\mathrm{\circA}| = \alpha_{\mathrm{H}}$.
\end{proof}

\hspace{0.5cm}The proof of Theorem \ref{Theorem 4.4} relies on the classical characterization of $\lambda_1$ and has the advantage of providing us with both a
sharp inequality for $\lambda_1$ and the characterization of equality. We wish to show how Perdomo’s method, based on a
maximum principle, also works to characterize the equality case (item $(ii)$ of Theorem \ref{Theorem 4.4}). 
This is interesting because, beside providing
an alternative argument for equality, it serves to better understand other
similar problems, as well as free-boundary  CMC hypersuperfaces. To prove the following theorem we will use ideas taken from \cite[Proposition $3.1$]{Alías 4}.

\begin{theorem}\label{Theorem 4.4}
    Let $x: \Sigma \longrightarrow \mathbb{S}^{n+1}_{+}$ be a free-boundary hypersurface with constant mean curvature $\mathrm{H}>0$ and $\eta = e_{n+2}$ a conormal unit vector of $\partial \Sigma$. If $|\mathrm{A}|$ is constant then either
    \begin{enumerate}[label=(\roman*)]
    \item  $\lambda_1 = -n\left( 1+\dfrac{\mathrm{H}}{n^2}\right)$, and  $\Sigma = \mathbb{S}^n_{+}(r) \subset \mathbb{S}^{n+1}_{+}$ is totally umbilical with $0<r<1$, or 
        \item  $\lambda_1 \leq -2n\left( 1+\dfrac{\mathrm{H}}{n^2}\right) + \dfrac{\mathrm{H}(n-2)}{\sqrt{n(n-1)}}\frac{\int_{\Sigma}|\mathrm{\circA}|^3d\mu}{\int_{\Sigma}|\mathrm{\circA}|^2d\mu}$, with equality if and only if $\Sigma$ is half of the $\mathrm{H}$-torus.
        \end{enumerate}
    \end{theorem}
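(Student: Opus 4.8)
The plan is to bound $\lambda_1$ from above by testing the Rayleigh quotient against $f=|\mathrm{\circA}|$ and then extracting the inequality from the generalized Simons equation \eqref{Eq. 2.6}, exactly in the spirit of the minimal-case argument of Subsection \ref{Subsection 3.1}. Since $\Sigma$ is free-boundary we have $\mathrm{q}=0$, and since $|\mathrm{\circA}|\nabla_{\eta}|\mathrm{\circA}| = \tfrac12\nabla_{\eta}|\mathrm{\circA}|^2 = 0$ on $\partial\Sigma$ by the hypothesis $\nabla_\eta|\mathrm{\circA}|^2=0$, the boundary contribution to $\mathrm{Q}(|\mathrm{\circA}|)$ vanishes. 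Hence, assuming $\Sigma$ is not totally umbilical so that $|\mathrm{\circA}|\not\equiv 0$,
\[
\lambda_1 \leq \frac{-\int_{\Sigma}|\mathrm{\circA}|\,J|\mathrm{\circA}|\,d\mu}{\int_{\Sigma}|\mathrm{\circA}|^2\,d\mu}.
\]

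The first computation I would carry out is the pointwise identity for $|\mathrm{\circA}|\,J|\mathrm{\circA}|$. Writing $\tfrac12\Delta|\mathrm{\circA}|^2 = |\mathrm{\circA}|\Delta|\mathrm{\circA}| + |\nabla|\mathrm{\circA}||^2$, substituting \eqref{Eq. 2.6}, and using $J=\Delta + |\mathrm{\circA}|^2 + n\bigl(1+\tfrac{\mathrm{H}^2}{n^2}\bigr)$, the $\pm|\mathrm{\circA}|^4$ terms cancel and one obtains
\[
|\mathrm{\circA}|\,J|\mathrm{\circA}| = |\nabla \mathrm{\circA}|^2 - |\nabla|\mathrm{\circA}||^2 + 2n\Bigl(1+\tfrac{\mathrm{H}^2}{n^2}\Bigr)|\mathrm{\circA}|^2 + \mathrm{H}\,\mathrm{tr}(\mathrm{\circA}^3).
\]
Since $\mathrm{H}$ is constant we have $\nabla\mathrm{\circA}=\nabla\mathrm{A}$, so $\nabla\mathrm{\circA}$ is symmetric and $\mathrm{tr}(\mathrm{\circA})=0$; Lemma \ref{Lemma 1.6} with $\mathrm{T}=\mathrm{\circA}$ (together with $\nabla|\mathrm{\circA}|^2=2|\mathrm{\circA}|\nabla|\mathrm{\circA}|$) then gives the Kato-type bound $|\nabla|\mathrm{\circA}||^2 \leq \tfrac{n}{n+2}|\nabla\mathrm{\circA}|^2$, whence $|\nabla\mathrm{\circA}|^2-|\nabla|\mathrm{\circA}||^2 \geq \tfrac{2}{n+2}|\nabla\mathrm{\circA}|^2 \geq 0$. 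For the cubic term I would apply Lemma \ref{Lema 2.3} to the eigenvalues of $\mathrm{\circA}$, yielding $\mathrm{H}\,\mathrm{tr}(\mathrm{\circA}^3)\geq -\tfrac{(n-2)}{\sqrt{n(n-1)}}\mathrm{H}|\mathrm{\circA}|^3$. Dropping the nonnegative gradient term gives $-|\mathrm{\circA}|\,J|\mathrm{\circA}| \leq -2n\bigl(1+\tfrac{\mathrm{H}^2}{n^2}\bigr)|\mathrm{\circA}|^2 + \tfrac{\mathrm{H}(n-2)}{\sqrt{n(n-1)}}|\mathrm{\circA}|^3$; integrating and dividing by $\int_\Sigma|\mathrm{\circA}|^2\,d\mu$ produces exactly the bound in $(ii)$.

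For the equality case, equality throughout forces $\int_\Sigma|\nabla\mathrm{\circA}|^2\,d\mu=0$, hence $\mathrm{\circA}$ is parallel, together with pointwise equality in Lemma \ref{Lema 2.3}, i.e. $(n-1)$ of the principal curvatures of $\mathrm{\circA}$ are equal and non-negative. As in the proof of Theorem \ref{Theorem 2.2}, a parallel $\mathrm{\circA}$ with constant $\mathrm{H}$ makes $\Sigma$ isoparametric with two principal curvatures of multiplicities $n-1$ and $1$, so by the classification invoked there $\Sigma$ is the free-boundary (half) $\mathrm{H}$-torus; consistency is confirmed by the fact that $|\mathrm{\circA}|$ constant and the eigenvalue equation $J|\mathrm{\circA}|=\lambda_1|\mathrm{\circA}|$ force $P_{\mathrm{H}}(|\mathrm{\circA}|)=0$, i.e. $|\mathrm{\circA}|=\alpha_{\mathrm{H}}$. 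Alternatively, following the Perdomo-style maximum-principle route announced before the statement, equality means $|\mathrm{\circA}|$ realizes the infimum and is therefore a first eigenfunction; being positive and satisfying $\nabla_\eta|\mathrm{\circA}|=0$, the strong maximum principle applied to the differential inequality for $J|\mathrm{\circA}|$ forces $|\mathrm{\circA}|$ constant, again giving $|\mathrm{\circA}|=\alpha_{\mathrm{H}}$. Finally, case $(i)$ is the totally umbilical situation $|\mathrm{\circA}|\equiv 0$: then $\Sigma=\mathbb{S}^n_+(r)$ and $J=\Delta + n\bigl(1+\tfrac{\mathrm{H}^2}{n^2}\bigr)$, and the constant function is a first Neumann eigenfunction (as $\nabla_\eta 1=0$) with $\mu_1=0$, so $\lambda_1=-n\bigl(1+\tfrac{\mathrm{H}^2}{n^2}\bigr)$, all other Laplacian eigenvalues being positive.

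The hardest part will be the rigorous equality characterization: one must check that the two pointwise equalities are mutually compatible, that the maximum-principle argument genuinely closes at the free boundary under the Neumann condition, and that the resulting isoparametric hypersurface is precisely the \emph{half} $\mathrm{H}$-torus rather than a full one. Some care is also needed because $|\mathrm{\circA}|$ is only Lipschitz where it vanishes, so the identity for $|\mathrm{\circA}|\,J|\mathrm{\circA}|$ and the use of Lemma \ref{Lemma 1.6} should be justified on the open set $\{|\mathrm{\circA}|>0\}$ with a limiting argument, as in Subsection \ref{Subsection 3.1}.
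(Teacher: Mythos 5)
Your proposal follows essentially the same route as the paper's own proof: testing the Rayleigh quotient with $f=|\mathrm{\circA}|$, combining the generalized Simons equation with the Kato-type inequality of Lemma \ref{Lemma 1.6} and the Alencar--do Carmo bound of Lemma \ref{Lema 2.3} to obtain the pointwise lower bound on $|\mathrm{\circA}|\,J|\mathrm{\circA}|$, killing the boundary terms via $\mathrm{q}=0$ and $\nabla_{\eta}|\mathrm{\circA}|^2=0$, deducing $\nabla\mathrm{\circA}=0$ in the equality case, and using $f=1$ for the totally umbilical case $(i)$. Your additional remarks on the regularity of $|\mathrm{\circA}|$ at its zero set and on the compatibility of the two pointwise equalities are sensible refinements, but the argument is the one in the paper.
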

    
\begin{proof}
    Suppose $x: \Sigma \longrightarrow \mathbb{S}^{n+1}_{+}$ is not totally umbilical: $|\mathrm{\circA}| \neq 0$. The generalized Simons equation gives 
    \begin{align}\label{Eq. 3.12}
        \frac{1}{2}\Delta|\mathrm{\circA}|^2 = |\nabla \mathrm{\circA}|^2 + \left(n\left(1+\frac{\mathrm{H}^2}{n^2}\right)-|\mathrm{\circA}|^2\right)|\mathrm{\circA}|^2 + \mathrm{H}\mathrm{tr}(\mathrm{\circA^3}).  
        \end{align}
       On the other hand
       \begin{align}\label{Eq. 3.13}
           \Delta|\mathrm{\circA}|^2 = |\mathrm{\circA}|\Delta |\mathrm{\circA}| + |\nabla |\mathrm{\circA}||^2.
       \end{align}
By \eqref{Eq. 3.12}, \eqref{Eq. 3.13} and \eqref{Eq. 2.7}
      \begin{align}\label{Eq. 3.14}
            |\mathrm{\circA}|\Delta |\mathrm{\circA}| + |\nabla |\mathrm{\circA}||^2 \geq |\nabla\mathrm{\circA}|^2 + n\left(1+\frac{\mathrm{H}^2}{n^2}\right)|\mathrm{\circA}|^2 - |\mathrm{\circA}|^4 + \frac{(n-2)}{\sqrt{n(n-1)}}\mathrm{H}|\mathrm{\circA}|^3. 
       \end{align}
       Using Lemma \ref{Lema 2.3}, \eqref{Eq. 2.8} and \eqref{Eq. 3.14} we can add  the term $n\left(1+\frac{\mathrm{H}^2}{n^2}\right)|\mathrm{\circA}|^2$ in the last inequality
       \begin{align}\label{Eq. 3.15}
         |\mathrm{\circA}|\Delta |\mathrm{\circA}| + |\mathrm{\circA}|^4 +  n\left(1+\frac{\mathrm{H}^2}{n^2}\right)|\mathrm{\circA}|^2 \geq \frac{2}{n+2}|\nabla \mathrm{\circA}|^2 +2n\left(1+\frac{\mathrm{H}^2}{n^2}\right)|\mathrm{\circA}|^2 - \frac{(n-2)}{\sqrt{n(n-1)}}\mathrm{H}|\mathrm{\circA}|^3.
       \end{align}
        Since $J = \Delta + |\mathrm{\circA}|^2 + n\left(1+\frac{\mathrm{H}^2}{n^2}\right)$, by \eqref{Eq. 3.15}
       \begin{align}\label{Eq. 3.16}
           |\mathrm{\circA}|J|\mathrm{\circA}| \geq \frac{2}{n+2}|\nabla \mathrm{\circA}|^2 +2n\left(1+\frac{\mathrm{H}^2}{n^2}\right)|\mathrm{\circA}|^2 - \frac{(n-2)}{\sqrt{n(n-1)}}\mathrm{H}|\mathrm{\circA}|^3
       \end{align}
       Notice that $|\mathrm{\circA}| \neq 0$ and $\lambda_1 \leq \dfrac{-\int_{\Sigma}|\mathrm{\circA}|J|\mathrm{\circA}|d\mu + \int_{\partial \Sigma}|\mathrm{\circA}|(\nabla_{\eta}|\mathrm{\circA}|-\mathrm{q}|\mathrm{\circA}|)ds}{\int_{\Sigma}|\mathrm{\circA}|^2 d\mu}$. Hence, since $\nabla_{\eta}|\mathrm{\circA}|^2 = 2 |\mathrm{\circA}|\nabla_{\eta}|\mathrm{\circA}|$ we can rewrite the inequality for $\lambda_1$ as
    \begin{align}\label{Eq. 3.17}
    \lambda_1 \leq \dfrac{-\int_{\Sigma}|\mathrm{\circA}|J|\mathrm{\circA}|d\mu + \frac{1}{2}\int_{\partial \Sigma}|\nabla_{\eta}|\mathrm{\circA}|^2 ds -\int_{\partial \Sigma}\mathrm{q}|\mathrm{\circA}|^2ds}{\int_{\Sigma}|\mathrm{\circA}|^2d\mu}.
\end{align}
We have assumed $\nabla_{\eta}|\mathrm{\circA}|^2 = 0$, so \eqref{Eq. 3.17} gives 
\begin{align}\label{Eq. 3.18}
    \lambda_1 \leq \dfrac{-\int_{\Sigma}|\mathrm{\circA}|J|\mathrm{\circA}|d\mu  -\int_{\partial \Sigma}\mathrm{q}|\mathrm{\circA}|^2ds}{\int_{\Sigma}|\mathrm{\circA}|^2d\mu}.
\end{align}
By \eqref{Eq. 3.16} and \eqref{Eq. 3.18}
\begin{align*}
     \lambda_1 &\leq \dfrac{-\int_{\Sigma}|\mathrm{\circA}|J|\mathrm{\circA}|d\mu  -\int_{\partial \Sigma}\mathrm{q}|\mathrm{\circA}|^2ds}{\int_{\Sigma}|\mathrm{\circA}|d\mu}\\
     & \leq -\frac{2}{n+2}\frac{\int_{\Sigma}|\nabla \mathrm{\circA}|^2 d\mu}{\int_{\Sigma}|\mathrm{\circA}|^2d\mu} - 2n\left(1+\frac{\mathrm{H}^2}{n^2}\right) +  \frac{\mathrm{H}(n-2)}{\sqrt{n(n-1)}}\frac{\int_{\Sigma}|\mathrm{\circA}|^3 d\mu}{\int_{\Sigma}|\mathrm{\circA}|^2 d\mu} - \mathrm{q}\frac{\int_{\partial\Sigma}|\mathrm{\circA}|^2 ds}{\int_{\Sigma}|\mathrm{\circA}|^2 d\mu}.
\end{align*}
Since $\mathrm{q} = 0$
\begin{align*}
     \lambda_1 \leq -\frac{2}{n+2}\frac{\int_{\Sigma}|\nabla \mathrm{\circA}|^2 d\mu}{\int_{\Sigma}|\mathrm{\circA}|^2d\mu} - 2n\left(1+\frac{\mathrm{H}^2}{n^2}\right) +  \frac{\mathrm{H}(n-2)}{\sqrt{n(n-1)}}\frac{\int_{\Sigma}|\mathrm{\circA}|^3 d\mu}{\int_{\Sigma}|\mathrm{\circA}|^2 d\mu}.
\end{align*}
   In particular, since $|\nabla \mathrm{\circA}|^2$ and $|\mathrm{\circA}|^2$ are non-negative, then 
   \begin{align*}
        \lambda_1 \leq  - 2n\left(1+\frac{\mathrm{H}^2}{n^2}\right) +  \frac{\mathrm{H}(n-2)}{\sqrt{n(n-1)}}\frac{\int_{\Sigma}|\mathrm{\circA}|^3 d\mu}{\int_{\Sigma}|\mathrm{\circA}|^2 d\mu},
   \end{align*}
   proving $(ii)$.\\

   Suppose that $\lambda_1 = - 2n\left(1+\frac{\mathrm{H}^2}{n^2}\right) +  \frac{\mathrm{H}(n-2)}{\sqrt{n(n-1)}}\frac{\int_{\Sigma}|\mathrm{\circA}|^3 d\mu}{\int_{\Sigma}|\mathrm{\circA}|^2 d\mu}$. By item $(i)$
     \begin{align}\label{Eq. 3.19}
        0 \leq  -\frac{2}{n+2}\frac{\int_{\Sigma}|\nabla \mathrm{\circA}|^2 d\mu}{\int_{\Sigma}|\mathrm{\circA}|^2d\mu} \leq 0.
    \end{align}
    Since $|\nabla \mathrm{\circA}|^2$ and $|\mathrm{\circA}|$ are non-negative, \eqref{Eq. 3.19} implies $\nabla \mathrm{\circA} = 0$. Consequently $|\nabla |\mathrm{\circA}||^2 = 0$. Hence $|\mathrm{\circA}|$ is constant and non-negative. Since $|\mathrm{\circA}|\neq 0$, it follows that  $|\mathrm{\circA}|$ is a positive constant.      Furthermore
    \begin{align*}
        J|\mathrm{\circA}| &= \Delta |\mathrm{\circA}| + |\mathrm{\circA}|^3 + n\left(1+\frac{\mathrm{H}^2}{n^2}\right) |\mathrm{\circA}|\\
        &=  |\mathrm{\circA}|\left(|\mathrm{\circA}|^2 + n\left(1+\frac{\mathrm{H}^2}{n^2}\right)\right),
    \end{align*}
    where $-\lambda_1 = |\mathrm{\circA}|^2 + n\left(1+\frac{\mathrm{H}^2}{n^2}\right)$.
    Also,
     \begin{align*}
        \lambda_1 = - 2n\left(1+\frac{\mathrm{H}^2}{n^2}\right) +  \frac{\mathrm{H}(n-2)}{\sqrt{n(n-1)}}\max_{\Sigma}|\mathrm{\circA}|.
    \end{align*}
    Hence
     \begin{align*}
        |\mathrm{\circA}|^2  = n\left(1+\frac{\mathrm{H}^2}{n^2}\right) -  \frac{\mathrm{H}(n-2)}{\sqrt{n(n-1)}}\max_{\Sigma}|\mathrm{\circA}|, 
    \end{align*}
 and from \cite[Theorem $1.1$]{Alías 4} $\Sigma$ is a free-boundary $\mathrm{H}-$torus.\\

Finally, take the test function $f=1$ in
      \begin{align*}
        \lambda_1 = \inf \left\{\frac{-\int_{\Sigma}fJf d\mu + \int_{\partial\Sigma}f(\nabla_{\eta}f-\mathrm{q}f)ds}{\int_{\Sigma}f^2 d\mu}; f \neq 0; f \in C^{\infty}(\Sigma) \right\}.
    \end{align*}
    Since $\nabla_{\eta}f = 0$ and $\mathrm{q} = 0$, then
    \begin{align*}
        \lambda_1 \leq \frac{Q(1)}{\int_{\Sigma}d\mu} = \frac{Q(1)}{\mathrm{Area}({\Sigma})}= -n\left( 1+\frac{\mathrm{H}}{n^2}\right) - \frac{1}{\mathrm{Area}(\Sigma)}\int_{\Sigma}|\mathrm{\circA}|^2 d\mu.
    \end{align*}
    Since $|\mathrm{\circA}|^2$ is non-negative, 
    \begin{align*}
        \lambda_1 \leq  -n\left( 1+\frac{\mathrm{H}}{n^2}\right) - \frac{1}{\mathrm{Area}(\Sigma)}\int_{\Sigma}|\mathrm{\circA}|^2 d\mu \leq -n\left( 1+\frac{\mathrm{H}}{n^2}\right).
    \end{align*}
    Furthermore, $\lambda_1 = -n\left( 1+\frac{\mathrm{H}}{n^2}\right)$ if and only if $|\mathrm{\circA}| = 0$, proving $(i)$.
\end{proof}

\vspace{1cm}

\noindent CRÍSIA DE OLIVEIRA \\
Instituto de Matem\'atica e Estat\'istica\\
Universidade Federal Fluminense\\
Rua professor Marcos Waldemar de Freitas Reis s/n (campus Gragoatá)\\ 24210-201 Niterói/RJ \\ 
Brazil\\
{\it email}:\ crisiaoliveira@id.uff.br

\end{document}